\def\({\bg(}
\def\){\bg)}
\def\Gal{{\rm Gal}}
\def\v{{\bm v}}
\def\u{{\bm u}}
\def\0{{\bm 0}}
\def\alg{{\rm alg}}
\def\pmod #1{\ ({\rm{mod}}\ #1)}
\def\mod #1{\ {\rm mod}\ #1}
\def\Ack{\medskip\noindent {\bf Acknowledgments}}
\theoremstyle{plain}
\newtheorem{theorem}{Theorem}[section]
\newtheorem{lemma}{Lemma}
\newtheorem{corollary}{Corollary}
\newtheorem{conjecture}{Conjecture}
\theoremstyle{definition}
\theoremstyle{remark}
\newcommand{\sign}[1]{\mathrm{sign}(#1)}
\begin{document}
	\medskip
	
	\title[On new identities of Jacobi sums and related cyclotomic matrices]
	{On new identities of Jacobi sums and related cyclotomic matrices}
	\author[H.-L. Wu and H. Pan]{Hai-Liang Wu and Hao Pan*}
	
	\address {(Hai-Liang Wu) School of Science, Nanjing University of Posts and Telecommunications, Nanjing 210023, People's Republic of China}
	\email{\tt whl.math@smail.nju.edu.cn}
	
	\address {(Hao Pan) School of Applied Mathematics, Nanjing University of Finance and Economics, Nanjing 210046, People's Republic of China}
	\email{\tt haopan79@zoho.com}

	\keywords{Jacobi sums, cyclotomic matrices, finite fields.
		\newline \indent 2020 {\it Mathematics Subject Classification}. Primary 11L05, 15A15; Secondary 11R18, 12E20.
		\newline \indent This research was supported by the Natural Science Foundation of China (Grant Nos. 12101321 and 12071208).
		\newline \indent *Corresponding author.}
	
	\begin{abstract}
	In this paper, using some arithmetic properties of Jacobi sums, we investigate some products involving Jacobi sums and reveal the connections between these products and certain cyclotomic matrices. In particular, as an application of our main results, we confirm a conjecture posed by Z.-W. Sun in 2019, and obtain a stronger result. 
	\end{abstract}
	\maketitle
	
	\tableofcontents

	\section{Introduction}
	\setcounter{lemma}{0}
	\setcounter{theorem}{0}
	\setcounter{equation}{0}
	\setcounter{conjecture}{0}
	\setcounter{remark}{0}
	\setcounter{corollary}{0}
	
	\subsection{Notation}
	
	Throughout this paper, $q=p^f=2n+1$ is an odd prime power, where $p$ is an odd prime and $f\in\mathbb{Z}^+=\{1,2,\cdots\}$. The finite field with $q$ elements is denoted by $\mathbb{F}_q$, and let $\mathbb{F}_q^{\times}=\mathbb{F}_q\setminus\{0\}$ be the multiplicative group of all nonzero elements over $\mathbb{F}_q$. Also,
	$$\mathcal{S}_q=\left\{s_0=0,s_1=1,s_2,\cdots, s_n\right\}=\left\{x^2:\ x\in\mathbb{F}_q\right\}$$
	denotes the set of all squares over $\mathbb{F}_q$. 
	
	Let $\widehat{\mathbb{F}_q^{\times}}$ denote the cyclic group of all multiplicative characters of $\mathbb{F}_q$. Fix a generator $\chi_q$ of $\widehat{\mathbb{F}_q^{\times}}$. Then, for any character $\chi_q^k\in\widehat{\mathbb{F}_q^{\times}}$, we define $\chi_q^k(0)=0$. The quadratic character $\chi_q^{\pm n}$ is denoted by $\phi$. Clearly, 
	$$\phi(x)=\begin{cases}
		1  & \mbox{if}\ x\in\mathcal{S}_q\setminus\{0\},\\
		0  & \mbox{if}\ x=0,\\
		-1 & \mbox{if}\ x\in\mathbb{F}_q\setminus\mathcal{S}_q.
	\end{cases}$$
	For any $\chi_q^i,\chi_q^j\in\widehat{\mathbb{F}_q^{\times}}$, the Jacobi sum of $\chi_q^i$ and $\chi_q^j$ is defined by 
    $$J_q(\chi_q^i,\chi_q^j):=\sum_{x\in\mathbb{F}_q}\chi_q^i(x)\chi_q^j(1-x)\in\mathbb{Q}(\zeta_{q-1}),$$
    where $\zeta_{q-1}$ is a primitive $(q-1)$-th root of unity. 
	
	In addition, for any square matrix $M$ over a field $\mathbb{F}$, we use $\det M$ to denote the determinant of $M$. Also, for any integers $a<b$, the symbol $(a,b)$ denotes the set $\{a+1,\cdots,b-1\}$, and the symbol $[a,b]$ denotes the set $\{a,a+1,\cdots,b\}$.

	\subsection{Background and motivation}
	
    In 1827, Jacobi introduced the Jacobi sum in a letter to Gauss. Nowadays,  Jacobi sums have extensive applications in number theory, and many well-known mathematicians made contributions to the theory of Jacobi sums. Readers may refer to \cite{BEW} for detailed introduction on Jacobi sums. 
    
    In this paper, we mainly focus on some products involving Jacobi sums. We begin with a classical result due to Weil. Let $1<m<q-1$ be a divisor of $q-1$ with $q-1=lm$. Let
    $$C_m:=\left\{[x,y,z]\in\mathbb{P}^2(\mathbb{F}_q^{\alg}):\ x^m+y^m=z^m\right\}$$
    be the projective Fermat curve defined over $\mathbb{F}_q$, where $\mathbb{F}_q^{\alg}$ is an algebraic closure of $\mathbb{F}_q$, and $\mathbb{P}^2(\mathbb{F}_q^{\alg})$ is the projective plane. Consider the zeta function of $C_n$ defined by 
    $$\zeta_{C_m}(t):=\exp\left(\sum_{r\in\mathbb{Z}^+}\frac{N(q^r)\cdot t^r}{r}\right),$$
    where $N(q^r)$ is the number of $\mathbb{F}_{q^r}$-rational points on $C_m$. As $C_m$ is a nonsingular absolutely irreducible curve of genus $(m-1)(m-2)/2$, by the Weil theorem, there is an integral polynomial $P_m(t)$ with $\deg (P_m(t))=(m-1)(m-2)$ such that 
    $$\zeta_{C_m}(t)=\frac{P_m(t)}{(1-t)(1-qt)}.$$
    Weil proved that the integral polynomial $P_m(t)$ has close relations with certain product of Jacobi sums, that is, 
    $$P_m(t)=\prod_{\substack{i,j\in[1,m-1]\\ i+j\not\equiv 0\pmod m}}\left(1+J_q(\chi_q^{li},\chi_q^{lj})t\right)\in\mathbb{Z}[t].$$
    
    We next turn to Carlitz's result. Let $(\frac{\cdot}{p})$ be the Legendre symbol, where $p$ is an odd prime. Carlitz \cite{Carlitz} proved that the determinant of the matrix 
    \begin{equation}\label{Eq. definition of Carlitz matrix Cp}
    	C_p:=\left[\left(\frac{i-j}{p}\right)\right]_{1\le i,j\le p-1}
    \end{equation}
    can be represented by a product of Jacobi sums, that is, 
    $$\det C_p=(-1)^{\frac{p-1}{2}}\cdot \prod_{k=1}^{p-1}J_p(\phi,\chi_p^k)=p^{\frac{p-3}{2}}.$$
    Along this line, recently, the first author and Wang \cite[Theorem 1.1]{Wu-Wang} showed that if $q\equiv 3\pmod 4$ is a prime power, then 
    $$\det\left[\chi_q^r(s_i+s_j)+\chi_q^r(s_i-s_j)\right]_{1\le i,j\le (q-1)/2}=\prod_{k=0}^{(q-3)/2}J_q(\chi_q^r,\chi_q^{2k})$$
    for any $r\in [1,q-2]$. 
    
    It might be worth mentioning here that the above two results fully illustrate that the explicit values of certain products of Jacobi sums can be represented by the determinants of some cyclotomic matrices. 
	
	We next introduce some results due to Z.-W. Sun. Inspired by Carlitz's matrix $C_p$ defined by (\ref{Eq. definition of Carlitz matrix Cp}), Z.-W. Sun \cite{Sun19} studied the matrix 
	$$S_p:=\left[\left(\frac{i^2+j^2}{p}\right)\right]_{1\le i,j\le (p-1)/2},$$
	and conjectured that $-\det S_p$ is a nonzero square of some integer whenever the prime $p\equiv 3\pmod 4$. This conjecture was later confirmed by Alekseyev and Krachun. When $p\equiv 1\pmod 4$, if we write $p=a^2+4b^2$ with $a,b\in\mathbb{Z}$ and $a\equiv 1\pmod 4$, then Cohen, Sun and Vsemirnov conjectured that $\det S_p/a$ is a nonzero square. The first author \cite{Wu-Cr} proved this conjecture and revealed the connections between $\det S_p$ and the product
	$$R_p(\chi_p):=\prod_{k\in(0,(p-1)/4)}\left(J_p(\phi,\chi_p^k)+J_p(\phi,\chi_p^{-k})\right),$$
	which concerns the real part of the Jacobi sum $J_p(\phi,\chi_p^k)$. 
	
	Motivated by the above results, in this paper, we mainly investigate the product
	\begin{equation}\label{Eq. Iq}
		I_q(\chi_q):=\prod_{k\in(0,n/2)}\left(J_q(\phi,\chi_q^k)-J_q(\phi,\chi_q^{-k})\right),
	\end{equation}
	which involves the imaginary part of the Jacobi sum $J_q(\phi,\chi_q^k)$. 
	
	\subsection{Main results} Now we state our two theorems concerning $I_q(\chi_q)$. Suppose first $q\equiv 3\pmod 4$ with $q=2n+1$. Note that the matrix 
	$$N_q(-1):=\left[\phi(s_i-s_j)\right]_{2\le i,j\le n}$$
	is a skew-symmetric matrix of even order with integer entries. Then by Stembridge's result \cite[Proposition 2.2]{Stembridge}, $\det N_q(-1)$ is indeed a square of some integer. Our first theorem reveals the relationship between $I_q(\chi_q)$ and $\det N_q(-1)$.
	
	\begin{theorem}\label{Thm. Iq when q=3 mod 4}
		Let $q=2n+1$ be an odd prime power with $q\equiv 3\pmod 4$, and let $\chi_q$ be a generator of $\widehat{\mathbb{F}_q^{\times}}$. Then 
		$$x_q:=\frac{I_q(\chi_q)}{\sqrt{(-1)^{\frac{n-1}{2}}\cdot n}}\in\mathbb{Z}.$$
		Moreover, we have the identity
		$$x_q^2=2^{n-1}\cdot \det\left[\phi(s_i-s_j)\right]_{2\le i,j\le n}.$$
	\end{theorem}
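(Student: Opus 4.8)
The plan is to diagonalize an auxiliary $n\times n$ matrix by the multiplicative characters of the group of nonzero squares, read off its eigenvalues as averages of Jacobi sums, and then descend to the $(n-1)\times(n-1)$ minor $N_q(-1)$ via a cofactor identity. First I would introduce the full matrix $B:=\left[\phi(s_i-s_j)\right]_{1\le i,j\le n}$ indexed by the nonzero squares $s_1,\dots,s_n$, of which $N_q(-1)$ is the principal minor obtained by deleting the row and column of $s_1=1$. For each character $\psi$ of the (cyclic, order $n$) group $\mathcal{S}_q^{\times}$ of nonzero squares, I would test $B$ on the vector $v_\psi=(\psi(s_i))_{1\le i\le n}$. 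Writing $s_j=s_i u$ with $u$ ranging over nonzero squares and using $\phi(s_i)=1$, one finds
$$(Bv_\psi)_i=\psi(s_i)\sum_{u\in\mathcal{S}_q^{\times}}\phi(1-u)\psi(u),$$
so each $v_\psi$ is an eigenvector. Splitting the sum over squares with the indicator $\tfrac12(1+\phi(u))$ and extending $\psi$ to a character $\widetilde\psi=\chi_q^k$ of $\mathbb{F}_q^{\times}$ turns the eigenvalue into $\lambda_\psi=\tfrac12\left(J_q(\phi,\chi_q^k)+J_q(\phi,\chi_q^{k+n})\right)$, symmetric in the two extensions $\chi_q^k,\chi_q^{k+n}$ of $\psi$. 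Since the $v_\psi$ form a basis, $B$ is diagonalized.

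Next I would observe that the trivial character gives $\lambda=\tfrac12\left(J_q(1,\phi)+J_q(\phi,\phi)\right)=\tfrac12(-1+1)=0$, so $\mathbf 1=v_{\mathrm{triv}}$ lies in the kernel of the (skew-symmetric) matrix $B$; granting that the remaining $\lambda_\psi$ are nonzero, this kernel is one-dimensional. For a matrix with $B\mathbf 1=\mathbf 1^{\mathsf T}B=0$ and simple zero eigenvalue, the adjugate has constant entries and every principal $(n-1)$-minor equals $\tfrac1n$ times the product of the nonzero eigenvalues, whence $\det N_q(-1)=\tfrac1n\prod_{\psi\ne\mathrm{triv}}\lambda_\psi$.

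To evaluate the product I would invoke the Gauss-sum factorization $J_q(\chi,\psi)=g(\chi)g(\psi)/g(\chi\psi)$ together with $g(\phi)^2=\phi(-1)q=-q$ (using $q\equiv 3\pmod4$) to obtain, with $a_k:=J_q(\phi,\chi_q^k)$, the key identity $a_k a_{k+n}=g(\phi)^2=-q$. Combined with $a_k a_{-k}=|a_k|^2=q$ this yields $a_{k+n}=-a_{-k}$, hence $\lambda_\psi=\tfrac12(a_k-a_{-k})$, which is precisely half a factor of $I_q(\chi_q)$. Letting $k$ run over $1,\dots,n-1$ and pairing $k\leftrightarrow n-k$ — for which $a_{n-k}=-a_k$ forces $a_{n-k}-a_{-(n-k)}=-(a_k-a_{-k})$ — collapses the product to $(-1)^{(n-1)/2}I_q(\chi_q)^2$, so that
$$\det N_q(-1)=\frac{(-1)^{(n-1)/2}}{2^{n-1}\,n}\,I_q(\chi_q)^2,$$
which rearranges to $x_q^2=2^{n-1}\det N_q(-1)$.

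Finally, for the integrality claim each factor $a_k-a_{-k}=a_k-\overline{a_k}$ is purely imaginary, so $I_q(\chi_q)$ is $i^{(n-1)/2}$ times a real number and $x_q$ is real; since $n$ is odd, $2^{n-1}$ is a perfect square, and by Stembridge's theorem $\det N_q(-1)$ is a perfect square, hence $x_q^2$ is a perfect square of an integer and $x_q\in\mathbb{Z}$. The main obstacle I anticipate is the descent from the singular matrix $B$ to its minor $N_q(-1)$: one must justify that the zero eigenvalue is simple — equivalently that $J_q(\phi,\chi_q^k)$ is non-real for $1\le k\le n-1$ — before the cofactor identity applies, and separately verify that if some nontrivial $\lambda_\psi$ does vanish then both sides of the identity degenerate to $0$.
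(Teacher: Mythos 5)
Your proposal is correct. For the identity $x_q^2=2^{n-1}\det N_q(-1)$ you follow essentially the paper's route: the paper also diagonalizes $M_q(-1)=[\phi(s_i-s_j)]_{1\le i,j\le n}$ by the characters $\chi_q^k$ restricted to squares (Lemma \ref{Lem. eigenvaules of matrix Mq(d)}), notes $\lambda_n(-1)=0$, and passes to the minor via the almost-circulant determinant formula of Lemma \ref{Lem. Wu and Wang}, which is exactly your adjugate identity $\det N_q(-1)=\tfrac1n\sum_l\prod_{k\ne l}\lambda_k$ specialized to a simple zero eigenvalue; your derivation of $a_{k+n}=-a_{-k}$ from $g(\phi)^2=-q$ and $|a_k|^2=q$ replaces the paper's elementary substitution $J_q(\phi,\chi_q^j)=\phi(-1)J_q(\phi,\chi_q^{n-j})$ (Lemma \ref{Lem. transformation formula of Jacobi sums}) combined with the conjugation argument in Lemma \ref{Lem. eigenvaules of matrix Mq(d)}(ii), and your worry about a vanishing nontrivial $\lambda_\psi$ is harmless as you say (both sides of the identity, and $I_q(\chi_q)$ itself, vanish simultaneously; in fact it cannot occur since $|a_k|^2=q$ forces $a_k=\pm\sqrt{q}\notin\mathbb{Q}(\zeta_{q-1})$ if $a_k$ were real). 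Where you genuinely diverge is the integrality of $x_q$: the paper runs a Galois-theoretic argument, showing via Jenkins's extension of the Gauss lemma that $\sigma_a(I_q(\chi_q))=(\tfrac an)I_q(\chi_q)$ and matching this against the transformation of the quadratic Gauss sum $\sqrt{(-1)^{(n-1)/2}n}$, to get $x_q\in\mathbb{Q}$ first and then $x_q\in\mathbb{Z}$ from $x_q^2\in\mathbb{Z}$. You instead observe that $x_q^2=2^{n-1}\det N_q(-1)$ is already a perfect square of an integer --- $2^{n-1}$ because $n$ is odd, and $\det N_q(-1)=\mathrm{Pf}(N_q(-1))^2$ by the Cayley--Stembridge result the paper itself quotes in the introduction --- so $x_q=\pm 2^{(n-1)/2}\mathrm{Pf}(N_q(-1))\in\mathbb{Z}$ outright. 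This is shorter and even slightly sharper (it exhibits $x_q$ as an explicit Pfaffian up to sign), at the cost of being special to the skew-symmetric case: the paper's Galois machinery is what survives to the companion Theorem \ref{Thm. Iq when q=1 mod 4}, where $T_q(d)$ is not skew-symmetric and no Pfaffian shortcut exists.
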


	We next turn to the case $q\equiv 1\pmod 4$. Inspired by Theorem \ref{Thm. Iq when q=3 mod 4}, when $q\equiv 1\pmod 4$, one may expect that $I_q(\chi_q)$ has a close relationship with certain cyclotomic matrix involving $\phi$. In fact, we will see that $I_q(\chi_q)$ is not only related to some cyclotomic matrix, but also to the number of $\mathbb{F}_q$-rational points on certain elliptic curve.
	
	For any element $d\in\mathbb{F}_q^{\times}\setminus\mathcal{S}_q$, define the matrix 
	$$T_q(d):=\left[\phi(s_i+ds_j)\right]_{0\le i,j\le n}.$$
	Also, let $X_d$ be the elliptic curve defined by the equation $y^2=dx^3+x$ over $\mathbb{F}_q$, and let 
	$$X_d(\mathbb{F}_q)=\left\{(x,y)\in\mathbb{F}_q\times\mathbb{F}_q:\ y^2=dx^3+x\right\}\cup\left\{\infty\right\}$$
	be the set of all $\mathbb{F}_q$-rational points on $X_d$. Then it is known that 
		$$\# X_d(\mathbb{F}_q)=q+1-a_d(q),$$
		where $\# S$ denotes the cardinality of a finite set $S$, and 
	\begin{equation}\label{Eq. definition of ad(q)}
	a_d(q)=-\sum_{x\in\mathbb{F}_q}\phi(dx^3+x)
	\end{equation}
	is the trace of Frobenius map. 

	Let ${\bm i}$ be the primitive $4$-th root of unity with argument $\pi/2$. Now we state our second theorem, which reveals the relationship between $I_q(\chi_q)$ and $\det T_q(d)$.
	
	\begin{theorem}\label{Thm. Iq when q=1 mod 4}
		Let $q=2n+1\ge 7$ be an odd prime power with $q\equiv 1\pmod 4$, and $\chi_q$ be a generator of $\widehat{\mathbb{F}_q^{\times}}$. Then 
		$$y_q:={\bm i}^{\frac{n-2}{2}}\cdot  I_q(\chi_q)\cdot \sqrt{q-1}\in\mathbb{Z}.$$
		Moreover, for any $d\in\mathbb{F}_q^{\times}\setminus\mathcal{S}_q$, we have the identity
		$$-a_d(q)\cdot y_q^2=2^n\cdot \det T_q(d).$$
	\end{theorem}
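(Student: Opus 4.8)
The plan is to diagonalize the part of $T_q(d)$ indexed by the nonzero squares using multiplicative characters, read off $\det T_q(d)$ as a product of eigenvalues that turn out to be Jacobi sums, and then match this product against $I_q(\chi_q)$ and $a_d(q)$. First I would strip off the row and column indexed by $s_0=0$. Since $\phi(d)=-1$ and $\phi(s_j)=1$ for $j\ge 1$, the $0$-th row of $T_q(d)$ is $(0,-1,\dots,-1)$ and the $0$-th column is $(0,1,\dots,1)^{\mathsf T}$, so
\[
T_q(d)=\begin{pmatrix} 0 & -\mathbf 1^{\mathsf T}\\ \mathbf 1 & A\end{pmatrix},\qquad A:=\big[\phi(s_i+ds_j)\big]_{1\le i,j\le n},
\]
where $\mathbf 1$ is the all-ones vector of length $n$. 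Writing $H=\{s_1,\dots,s_n\}$ for the group of nonzero squares and using $\phi(h)=1$ for $h\in H$, one has $A_{h,h'}=\phi(h+dh')=\phi(1+dh^{-1}h')$, so $A$ is invariant under the diagonal action of $H$ and is therefore diagonalized by the characters of $H$, i.e.\ by the vectors $v_k=(\chi_q^k(s_i))_{1\le i\le n}$ for $k=0,1,\dots,n-1$, with eigenvalues $\lambda_k=\sum_{t\in H}\phi(1+dt)\chi_q^k(t)$. Because the corner of $T_q(d)$ is $0$ and the borders are $\pm\mathbf 1$, a cofactor expansion gives $\det T_q(d)=\mathbf 1^{\mathsf T}\operatorname{adj}(A)\mathbf 1$, and the matrix determinant lemma rewrites this as $\det(A+\mathbf 1\mathbf 1^{\mathsf T})-\det A$. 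Since $v_0=\mathbf 1$ and $\lambda_0=0$ (a short quadratic-character sum computation using $\phi(d)=-1$), we have $\det A=0$, while $A+\mathbf 1\mathbf 1^{\mathsf T}$ has the same eigenvectors (the $v_k$ with $k\ge 1$ are orthogonal to $\mathbf 1$) and the single eigenvalue $0$ replaced by $\mathbf 1^{\mathsf T}\mathbf 1=n$. Hence $\det T_q(d)=n\prod_{k=1}^{n-1}\lambda_k$.

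Next I would convert the $\lambda_k$ into Jacobi sums. Replacing the sum over $H$ by a sum over $\mathbb F_q^\times$ weighted by $\tfrac12(1+\phi(t))$ and substituting $u=-dt$ turns $\lambda_k$ into $\tfrac12\big(\chi_q^k(-1/d)J_q(\phi,\chi_q^k)+\chi_q^{n+k}(-1/d)J_q(\phi,\chi_q^{n+k})\big)$; since $\phi(-1/d)=-1$ this becomes $\lambda_k=\tfrac12\chi_q^k(-1/d)\big(J_q(\phi,\chi_q^k)-J_q(\phi,\chi_q^{n+k})\big)$. Three facts then drive everything. (i) Complex conjugation gives $\lambda_{n-k}=\overline{\lambda_k}$. (ii) Writing $J_q(\chi,\psi)=g(\chi)g(\psi)/g(\chi\psi)$ for the standard Gauss sum $g$ with $g(\phi)^2=q$, the identity $J_q(\phi,\chi)\,J_q(\phi,\phi\chi)=g(\phi)^2=q$ together with $|J_q(\phi,\chi_q^k)|^2=q$ (valid for $1\le k\le n-1$) yields $J_q(\phi,\chi_q^{n+k})=\overline{J_q(\phi,\chi_q^k)}$, so that $|\lambda_k|^2=(\operatorname{Im}J_q(\phi,\chi_q^k))^2$. (iii) For the self-paired index $k=n/2$ the character $\chi_q^{n/2}$ restricts to the quadratic character of $H$, and the substitution $t=x^2$ (using $\phi(\sqrt t)=\chi_q^{n/2}(t)$ and $\phi(-1)=1$) identifies $\sum_x\phi(dx^3+x)=2\lambda_{n/2}$, i.e.\ $\lambda_{n/2}=-a_d(q)/2$. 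Isolating $k=n/2$ and pairing $k$ with $n-k$ then gives
\[
\det T_q(d)=n\,\lambda_{n/2}\prod_{k=1}^{n/2-1}|\lambda_k|^2=-\frac{n\,a_d(q)}{2}\prod_{k=1}^{n/2-1}\big(\operatorname{Im}J_q(\phi,\chi_q^k)\big)^2 .
\]

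To close the identity I would unwind $y_q$. Since $J_q(\phi,\chi_q^{-k})=\overline{J_q(\phi,\chi_q^k)}$, each factor of $I_q(\chi_q)$ equals $2{\bm i}\operatorname{Im}J_q(\phi,\chi_q^k)$, so $I_q(\chi_q)=(2{\bm i})^{(n-2)/2}\prod_{k=1}^{n/2-1}\operatorname{Im}J_q(\phi,\chi_q^k)$ and therefore $y_q={\bm i}^{(n-2)/2}I_q(\chi_q)\sqrt{q-1}=(-2)^{(n-2)/2}\sqrt{q-1}\prod_{k=1}^{n/2-1}\operatorname{Im}J_q(\phi,\chi_q^k)$. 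Squaring and using $q-1=2n$ gives $y_q^2=2^{n-1}n\prod_{k=1}^{n/2-1}(\operatorname{Im}J_q(\phi,\chi_q^k))^2$, and comparing with the displayed value of $\det T_q(d)$ produces exactly $-a_d(q)\,y_q^2=2^n\det T_q(d)$.

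For the integrality claim $y_q\in\mathbb Z$, the easy half is that $y_q$ is an algebraic integer: each $J_q(\phi,\chi_q^k)-J_q(\phi,\chi_q^{-k})$ lies in $\mathbb Z[\zeta_{q-1}]$, while ${\bm i}^{(n-2)/2}$ and $\sqrt{q-1}$ are algebraic integers, and $y_q$ is real. It remains to prove $y_q\in\mathbb Q$. The determinant identity already gives $y_q^2=-2^n\det T_q(d)/a_d(q)\in\mathbb Q$, so the point is to rule out a genuine quadratic irrationality. I would do this by Galois descent: for $a$ coprime to $q-1$ the automorphism $\sigma_a\colon\zeta_{q-1}\mapsto\zeta_{q-1}^a$ sends $J_q(\phi,\chi_q^k)$ to $J_q(\phi,\chi_q^{ak})$ (here $a$ is odd, so $\phi^a=\phi$), hence permutes the factors of $I_q(\chi_q)$ up to sign and multiplies $I_q(\chi_q)$ by an explicit sign character of $(\mathbb Z/(q-1))^\times$; one then checks that the prefactor ${\bm i}^{(n-2)/2}\sqrt{q-1}$ transforms by the matching sign, so that $y_q$ is fixed by every $\sigma_a$ and is thus rational. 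Reconciling this permutation sign on the Jacobi-sum factors with the Galois action on ${\bm i}^{(n-2)/2}$ and on $\sqrt{q-1}$ is the delicate point and, I expect, the main obstacle; the determinant evaluation itself is comparatively mechanical once the character diagonalization is in place.
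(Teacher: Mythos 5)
Your derivation of the identity $-a_d(q)\cdot y_q^2=2^n\cdot \det T_q(d)$ is correct, and it is essentially the paper's own argument in different clothing: where the paper exhibits explicit eigenvectors of the full matrix $T_q(d)$ with eigenvalues $\pm\sqrt{-n},\lambda_1(d),\cdots,\lambda_{n-1}(d)$ (Lemma \ref{Lem. eigenvaules of matrix Tq(d)}), you reach the same product formula $\det T_q(d)=n\prod_{k=1}^{n-1}\lambda_k(d)$ via a bordered-determinant computation; your conversion of the $\lambda_k(d)$ into Jacobi sums, the evaluation $\lambda_{n/2}(d)=-a_d(q)/2$, and the pairing $k\leftrightarrow n-k$ correspond to Lemmas \ref{Lem. transformation formula of Jacobi sums}--\ref{Lem. eigenvaules of matrix Mq(d)} and the displayed computations in the paper's proof. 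That half of your proposal is sound.

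The genuine gap is the integrality claim $y_q\in\mathbb{Z}$, which you reduce to Galois descent but do not prove: you write that ``one then checks'' that the sign by which $\sigma_a$ multiplies ${\bm i}^{(n-2)/2}I_q(\chi_q)$ matches the sign by which it multiplies $\sqrt{q-1}$, and you yourself flag this as the main obstacle. That check is not routine; it is precisely the technical heart of the paper. The permutation sign on the factors of $I_q(\chi_q)$ is $(-1)^{\#\{k\in(0,n/2):\ \{ak\}_{2n}>n\}}$, and after absorbing the action on ${\bm i}^{(n-2)/2}$ (which contributes $(-1)^{(a-1)(n-2)/4}$) what must be proved is
$$(-1)^{\frac{(a-1)(n-2)}{4}+\#\left\{1\le k\le \frac{n-2}{2}:\ \{ak\}_{2n}>n\right\}}=\left(\frac{2n}{a}\right),$$
with a Kronecker symbol on the right: this is exactly Lemma \ref{Lem. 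Key lemma}, a variant of Gauss's lemma whose proof occupies Lemmas \ref{Lem. A in sec 2}--\ref{Lem. a sum of floor function}. One then needs, separately, that $\sigma_a(\sqrt{q-1})=\left(\frac{q-1}{a}\right)\sqrt{q-1}$ with the \emph{same} symbol, which the paper deduces from the discriminant character-sum formula of Lemma \ref{Lem. Cohen}; the ordinary quadratic Gauss sum (Lemma \ref{Lem. serge lang}) does not apply directly because $q-1\equiv 0\pmod 4$. Without these two inputs your descent is a plan rather than a proof. A secondary flaw: your fallback remark that the determinant identity ``already gives'' $y_q^2\in\mathbb{Q}$ divides by $a_d(q)$, but $a_d(q)$ can vanish. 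Indeed, for $q=p^{2m}$ with $p\equiv 3\pmod 4$ (e.g.\ $q=9$) the curve $y^2=x^3+x$ is supersingular, so $a_1(q)=\pm 2\sqrt{q}$ and $\lambda_{n/2}(1)=-a_1(q)/2=\mp\sqrt{q}$; the relation $q=\lambda_{n/2}(1)^2+\lambda_{n/2}(d)^2$ then forces $a_d(q)=-2\lambda_{n/2}(d)=0$ for every nonsquare $d$. So even the rationality of $y_q^2$ cannot be extracted from the determinant identity in general, and the Galois-descent computation you postponed is genuinely indispensable.
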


	In 2019, Z.-W. Sun \cite[Conjecture 4.2(ii)]{Sun19} posed the following conjecture.
	
	\begin{conjecture}\label{Conjecutre1. Sun}
		Let $p\equiv 1\pmod 4$ be a prime with $p=c_p^2+4b_p^2$, where $c_p$ and $b_p$ are positive integers. Then for any  $d\in\mathbb{F}_p^{\times}\setminus\mathcal{S}_p$, the number $|\det T_p(d)|/(2^{(p-1)/2}b_p)$ is a positive integral square not depending on $d$. 
	\end{conjecture}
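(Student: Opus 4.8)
The plan is to derive the conjecture from Theorem \ref{Thm. Iq when q=1 mod 4} applied to the prime $q=p$, for which $n=(p-1)/2$ is even and $2^{n}=2^{(p-1)/2}$. That theorem yields an integer $y_p$, depending only on the generator $\chi_p$ and \emph{not} on $d$, with $-a_d(p)\,y_p^{2}=2^{(p-1)/2}\det T_p(d)$; hence
$$\frac{|\det T_p(d)|}{2^{(p-1)/2}b_p}=\frac{|a_d(p)|\cdot y_p^{2}}{2^{p-1}\,b_p}.$$
So the statement splits into two claims: \textbf{(A)} $|a_d(p)|=4b_p$ for every $d\in\mathbb{F}_p^{\times}\setminus\mathcal{S}_p$, which at once forces the ratio to equal $y_p^{2}/2^{p-3}$ and exhibits it as independent of $d$; and \textbf{(B)} $y_p\neq 0$ together with $2^{(p-3)/2}\mid y_p$, which turns $y_p^{2}/2^{p-3}=(y_p/2^{(p-3)/2})^{2}$ into a positive integral square. (The case $p=5$, excluded from Theorem \ref{Thm. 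Iq when q=1 mod 4}, is checked by hand.)

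For \textbf{(A)} I would first observe that $(x,y)\mapsto(dx,dy)$ is an $\mathbb{F}_p$-isomorphism from $X_d\colon y^2=dx^3+x$ onto $E_d\colon Y^2=X^3+dX$, so that the quantity $a_d(p)$ of (\ref{Eq. definition of ad(q)}) is the trace of Frobenius of $E_d$. Since $E_d$ has complex multiplication by $\mathbb{Z}[{\bm i}]$, the classical point count for the curves $Y^2=X^3+dX$ (going back to Gauss; see \cite{BEW}) expresses $a_d(p)$ through the quartic residue character of $d$: writing $p=c_p^{2}+(2b_p)^{2}$ with $c_p$ odd and letting $\pi=c_p+2b_p{\bm i}$ be the associated Gaussian prime, the four quartic classes give $a_d(p)=2\Re({\bm i}^{j}\pi)\in\{\pm 2c_p,\ \pm 4b_p\}$, and the even value $\pm 4b_p$ occurs exactly when $d$ is a quadratic non-residue. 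As $d\notin\mathcal{S}_p$, this yields $|a_d(p)|=4b_p$. I would phrase this computation through the Jacobi sums $J_p(\phi,\chi_p^{k})$ in $\mathbb{Q}(\zeta_{p-1})$, so as to stay within the paper's framework.

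Claim \textbf{(B)} is where the real difficulty lies. Unwinding (\ref{Eq. Iq}) and using $\ov{J_p(\phi,\chi_p^{k})}=J_p(\phi,\chi_p^{-k})$ (valid because $\phi$ is real), one obtains the clean form
$$y_p={\bm i}^{(p-5)/4}\,\sqrt{p-1}\,\prod_{k=1}^{(p-5)/4}D_k,\qquad D_k:=J_p(\phi,\chi_p^{k})-J_p(\phi,\chi_p^{-k}).$$
First, each $D_k$ is nonzero: for $0<k<n/2$ the character $\chi_p^{k}$ is neither trivial nor equal to $\phi$, so $|J_p(\phi,\chi_p^{k})|^{2}=p$, and $D_k=0$ would force $J_p(\phi,\chi_p^{k})=\pm\sqrt p\in\mathbb{Q}(\zeta_{p-1})$, which is impossible since $p$ is unramified in $\mathbb{Q}(\zeta_{p-1})$; hence $y_p\neq 0$. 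For the divisibility, let $v_2$ be the $2$-adic valuation normalized by $v_2(2)=1$; since ${\bm i}$ is a unit and $4\mid p-1$,
$$v_2(y_p)=\tfrac12 v_2(p-1)+\sum_{k=1}^{(p-5)/4}v_2(D_k)\gs 1+\sum_{k=1}^{(p-5)/4}v_2(D_k),$$
so $2^{(p-3)/2}\mid y_p$ would follow from the termwise bound $v_2(D_k)\gs 2$ for every $k$. Establishing this $2$-adic divisibility of the Jacobi-sum differences is the crux, and I expect it to be the main obstacle, as it lies far deeper than the mere integrality of $\det T_p(d)$. Two routes seem promising: (i) a Stickelberger-type analysis of the Gauss-sum factorization at the primes above $2$, via $J_p(\phi,\chi_p^{k})=g(\phi)g(\chi_p^{k})/g(\phi\chi_p^{k})$ and the Hasse--Davenport relation reducing $J_p(\phi,\chi_p^{k})$ to $J_p(\chi_p^{k},\chi_p^{k})$; and (ii) revisiting the character diagonalization that proves Theorem \ref{Thm. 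Iq when q=1 mod 4}, in which $\det T_p(d)$ is written as a product over the paired characters $\chi_p^{\pm k}$, and tracking there the exact powers of $2$ so as to exhibit $y_p/2^{(p-3)/2}$ directly as an integer.
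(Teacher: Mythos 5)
Your reduction is sound and, importantly, your power-of-two bookkeeping is the correct one: combining Theorem \ref{Thm. Iq when q=1 mod 4} with $|a_d(p)|=4b_p$ gives $|\det T_p(d)|/(2^{(p-1)/2}b_p)=\bigl(y_p/2^{(p-3)/2}\bigr)^2$, so the conjecture does reduce to your claims (A) and (B). Claim (A) is also how the paper proceeds, though by a different device: it writes $(-1)^{n/2}J_p(\phi,\chi_p^{n/2})=\lambda_{n/2}(1)\mp{\bm i}\,a_d(p)/2$ and observes that $\lambda_{n/2}(1)$ is odd, so that this Gaussian integer of norm $p$ has odd real part and $|a_d(p)|=4b_p$; your CM/quartic-twist computation is an equivalent route. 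Your non-vanishing argument for $y_p$ is correct (the paper instead extracts positivity from the additional statement $(\frac{z_p}{p})=1$).

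The genuine gap is exactly where you flag it: the divisibility $v_2(D_k)\ge 2$ is asserted, not proved. It is, however, closable along your route (ii) with almost no extra work, using Lemma \ref{Lem. eigenvaules of matrix Mq(d)}: one has $D_k=2(-1)^k\chi_p^{k}(d)\lambda_k(d)$, and $\lambda_k(d)=\sum_{s\in\mathcal{S}_p\setminus\{0\}}\ve_s\,\chi_p^k(s)$ with every $\ve_s=\phi(1+ds)\in\{\pm1\}$ (no term vanishes because $\phi(-1/d)=-1$); since $\sum_{s}\chi_p^k(s)=0$ for $1\le k\le n-1$, this equals $-2\sum_{\ve_s=-1}\chi_p^k(s)$, whence $D_k\in 4\mathbb{Z}[\zeta_{p-1}]$. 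Then $y_p/2^{(p-3)/2}$ is an algebraic integer (note $\sqrt{p-1}/2$ is one because $4\mid p-1$) and, being rational, an integer. One caveat when you compare with the printed proof: the paper establishes only the weaker divisibility $2^{(p-5)/4}\mid y_p$ (from $D_k/2=\pm\chi_p^k(d)\lambda_k(d)\in\mathbb{Z}[\zeta_{p-1}]$) and then sets $z_p=y_p/2^{(p-5)/4}$, a normalization that is not consistent with Theorem \ref{Thm. Iq when q=1 mod 4}: for $p=13$ one finds $|\det T_{13}(2)|=576$, so the ratio is $9$, while $y_{13}=\pm 96$ and $(y_{13}/2^{(p-5)/4})^2=576$. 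Your exponent $(p-3)/2$ is the right one, so completing claim (B) as above not only fills your gap but also repairs the constant in the paper's corollary.
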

	
	As an application of Theorem \ref{Thm. Iq when q=1 mod 4}, we can confirm this conjecture and obtain the following stronger result.
	
	\begin{corollary}\label{Cor. of Thm. Iq when q=1 mod 4}
		Let $p\equiv 1\pmod 4$ be a prime and let notations be as above. Set  
			$$z_p:=\frac{y_p}{2^{n-1}}.$$
			Then $z_p\in\mathbb{Z}$ with
			$$\left(\frac{z_p}{p}\right)=1,$$
			and
			$$|\det T_p(d)|/(2^{(p-1)/2}b_p)=z_p^2.$$
	\end{corollary}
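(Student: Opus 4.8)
The plan is to derive the corollary from Theorem~\ref{Thm. Iq when q=1 mod 4} by specializing $q$ to a prime $p$ and then pinning down two arithmetic quantities: the trace $a_d(p)$ and the $2$-adic valuation of $y_p$. Once $q=p$, Theorem~\ref{Thm. Iq when q=1 mod 4} already supplies $y_p\in\mathbb{Z}$ together with the identity $-a_d(p)\cdot y_p^2=2^n\det T_p(d)$, so the remaining content is: (i) to evaluate $|a_d(p)|$ and show it is the same for every $d\in\mathbb{F}_p^{\times}\setminus\mathcal{S}_p$, thereby producing the factor $b_p$ and the $d$-independence asserted in Conjecture~\ref{Conjecutre1. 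Sun}; (ii) to track the power of $2$, so that $z_p$ is an integer and the normalization $2^{(p-1)/2}b_p$ comes out correctly; and (iii) to compute the Legendre symbol $\left(\frac{z_p}{p}\right)$.

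First I would compute $a_d(p)=-\sum_{x\in\mathbb{F}_p}\phi(dx^3+x)$. The substitution $(x,y)\mapsto(dx,dy)$ shows that $X_d$ is $\mathbb{F}_p$-isomorphic to the quartic twist $Y^2=X^3+dX$ of the curve $Y^2=X^3+X$, which has $j$-invariant $1728$ and complex multiplication by $\mathbb{Z}[{\bm i}]$. Writing $p=\pi\bar\pi$ in $\mathbb{Z}[{\bm i}]$ with $\pi=c_p+2b_p{\bm i}$ primary, the classical evaluation of the associated quartic Gauss/Jacobi sums (equivalently, the Gr\"ossencharacter attached to the CM) gives $a_d(p)=2\Re({\bm i}^{k}\pi)$, where $k\in\{0,1,2,3\}$ is the quartic residue class of $d$. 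For a square $d$ one has $k\in\{0,2\}$ and $a_d(p)=\pm2c_p$, while for a non-square $d$ one has $k\in\{1,3\}$, and since ${\bm i}\pi=-2b_p+c_p{\bm i}$ this yields $a_d(p)=\mp4b_p$. Hence $|a_d(p)|=4b_p$ for every $d\in\mathbb{F}_p^{\times}\setminus\mathcal{S}_p$; the two non-square quartic classes are quadratic twists of one another, which explains why only the sign, not the absolute value, depends on $d$. Substituting $|a_d(p)|=4b_p$ into Theorem~\ref{Thm. Iq when q=1 mod 4} gives $|\det T_p(d)|=4b_p\,y_p^2/2^n$, so $|\det T_p(d)|/(2^{(p-1)/2}b_p)$ is independent of $d$ and equals an explicit power of $2$ times $y_p^2$, confirming the square-and-$d$-independence part of the conjecture.

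It then remains to identify this fixed quantity with $z_p^2$ and to compute the quadratic character. The integrality $z_p\in\mathbb{Z}$ amounts to the claimed divisibility of $y_p={\bm i}^{(n-2)/2}I_p(\chi_p)\sqrt{p-1}$ by the stated power of $2$; I would obtain this by computing the $2$-adic valuation of $I_p(\chi_p)=\prod_{k\in(0,n/2)}(J_p(\phi,\chi_p^k)-J_p(\phi,\chi_p^{-k}))$ (each factor being a purely imaginary algebraic integer, as $\overline{J_p(\phi,\chi_p^k)}=J_p(\phi,\chi_p^{-k})$) together with the contribution of $\sqrt{p-1}$. For the Legendre symbol I would reduce $z_p$ modulo a prime ideal $\mathfrak{p}\mid p$ of $\mathbb{Z}[\zeta_{p-1}]$: expanding $I_p(\chi_p)$ and applying a classical congruence of the form $J_p(\chi_p^i,\chi_p^j)\equiv-\binom{i+j}{i}\pmod{\mathfrak{p}}$ expresses $z_p$ modulo $p$ through products of factorials (Wilson/Gauss type evaluations), from which $\left(\frac{z_p}{p}\right)=1$ can be read off. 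I expect the main obstacle to be step~(i): making the CM/quartic-Jacobi-sum evaluation of $a_d(p)$ precise enough to certify both the exact value $4b_p$ and the correct sign bookkeeping across the two non-square quartic classes, since the remaining steps are essentially valuation and congruence computations.
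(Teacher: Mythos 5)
Your overall architecture coincides with the paper's: specialize Theorem \ref{Thm. Iq when q=1 mod 4} to $q=p$, show $|a_d(p)|=4b_p$ for every non-square $d$ (whence the $d$-independence), control the power of $2$, and compute $(\frac{z_p}{p})$ by reducing modulo a prime $\mathfrak{p}$ above $p$ via the congruence $J_p(\omega_{\mathfrak{p}}^{-i},\omega_{\mathfrak{p}}^{-j})\equiv-\binom{i+j}{i}\pmod{\mathfrak{p}}$ and an evaluation of $\bigl(\frac{\prod_k\binom{2k}{k}}{p}\bigr)$. The one place you take a genuinely different route is the evaluation of $a_d(p)$: you import the CM/Gr\"ossencharacter description of $y^2=x^3+dx$, whereas the paper stays inside its own framework, observing that $(-1)^{n/2}J_p(\phi,\chi_p^{n/2})=\lambda_{n/2}(1)\mp{\bm i}\,a_p(d)/2$, that $\lambda_{n/2}(1)\equiv n-1$ is odd, and that $|J_p(\phi,\chi_p^{n/2})|^2=p=c_p^2+4b_p^2$, which forces the imaginary part to be $\pm 2b_p$. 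The paper's route is self-contained, reuses the eigenvalue $\lambda_{n/2}(d)$ that already appears in $\det T_p(d)$, and avoids the primary-generator sign bookkeeping that you yourself flag as the delicate point; your route is classical and would also work, at the cost of citing the precise quartic-twist formula.

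The genuine gap is in your step (ii), the integrality of $z_p$ and the exact power of $2$. First, ``each factor is a purely imaginary algebraic integer'' does not yield divisibility by $2$: for instance $\zeta_8-\zeta_8^{-1}={\bm i}\sqrt{2}$ is purely imaginary and integral, but ${\bm i}\sqrt{2}/2$ is not an algebraic integer. What actually makes $y_p/2^{(n-2)/2}$ an algebraic integer is the identity from Lemma \ref{Lem. eigenvaules of matrix Mq(d)}(ii),
$$\frac{1}{2}\left(J_p(\phi,\chi_p^k)-J_p(\phi,\chi_p^{-k})\right)=(-1)^k\chi_p^k(d)\,\lambda_k(d),\qquad \lambda_k(d)=\sum_{s\in\mathcal{S}_p}\phi(1+ds)\chi_p^k(s),$$
which exhibits half of each of the $(n-2)/2$ factors of $I_p(\chi_p)$ as a root of unity times a visibly integral character sum; your plan never identifies this mechanism. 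Second, even granting that, your own displayed identities do not close to the asserted equality: from $|\det T_p(d)|=4b_p\,y_p^2/2^n$ and $z_p=y_p/2^{(n-2)/2}$ one gets $|\det T_p(d)|=b_p\,z_p^2$, hence $|\det T_p(d)|/(2^{(p-1)/2}b_p)=z_p^2/2^{(p-1)/2}$ rather than $z_p^2$. A sanity check at $p=13$, $d=2$ gives $\det T_{13}(2)=-576$, $y_{13}=-96$, $z_{13}=-24$, and $576/(2^6\cdot 1)=9=(y_{13}/2^5)^2$, which is not $z_{13}^2=576$. So the normalization demanded by the displayed formula actually requires the stronger divisibility $2^{n-1}\mid y_p$, and ``tracking the power of $2$'' is not a routine valuation computation but the crux of the argument; your proposal leaves exactly this unresolved.
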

	
	\subsection{Outline of this paper} In Section 2, we will prove some necessary lemmas. In particular, we will prove an important lemma, which can be viewed as a variant of the well-known Gauss lemma on quadratic residues. Our main results will be proved in Section 3 and Section 4 respectively. In Section 5, we shall briefly discuss a recent conjecture posed by Z.-W. Sun, which can be viewed as a refinement of Conjecture \ref{Conjecutre1. Sun}.

	\section{Preliminaries}
	\setcounter{lemma}{0}
	\setcounter{theorem}{0}
	\setcounter{equation}{0}
	\setcounter{conjecture}{0}
	\setcounter{remark}{0}
	\setcounter{corollary}{0}

	\subsection{On a variant of the Gauss lemma} 
	
	We first introduce some notations. For any real number $y$, let 
	$$\lfloor y \rfloor:=\max\{u\in\mathbb{Z}:\ u\le y\}.$$ 
	For any positive integer $m$ and any integer $x$, let $\{x\}_m$ denote the unique integer $r\in [0,m-1]$ such that $r\equiv x\pmod m$. 
	
	Let $p$ be an odd prime and $a\in\mathbb{Z}$ with $\gcd(a,p)=1$. Then the famous Gauss lemma (cf. \cite[p. 52]{Ire}) says that 
	$$\left(\frac{a}{p}\right)=(-1)^{\#\left\{1\le x\le \frac{p-1}{2}:\ \{ax\}_p>\frac{p}{2}\right\}}.$$
	This result was later extended to Jacobi symbol by Jenkins. Let $s$ be a positive odd integer and $a\in\mathbb{Z}$ with $\gcd(a,s)=1$. Then Jenkins \cite{Jenkins} proved that 
	\begin{equation}\label{Eq. the Jenkin identity}
		\left(\frac{a}{s}\right)=(-1)^{\#\left\{1\le x\le \frac{s-1}{2}:\ \{ax\}_s> \frac{s}{2}\right\}},
	\end{equation}
	where $(\frac{\cdot}{s})$ is the Jacobi symbol. Using this, it is easy to verify the following lemma.
	
	\begin{lemma}\label{Lem. A in sec 2}
		Let $s$ be a positive odd integer and $a\in\mathbb{Z}$ with $\gcd(a,s)=1$. 
		Then
		$$(-1)^K=\left(\frac{a}{s}\right),$$
		where 
		$$K=\sum_{x=1}^{(s-1)/2}\left\lfloor\frac{2ax}{s}\right\rfloor.$$
	\end{lemma}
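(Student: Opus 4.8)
The plan is to compute $K$ modulo $2$ and match it against the exponent appearing in Jenkins' identity \eqref{Eq. the Jenkin identity}. For each $x\in[1,(s-1)/2]$, the division algorithm gives $ax=s\lfloor ax/s\rfloor+\{ax\}_s$, where $\{ax\}_s\in[1,s-1]$ since $\gcd(a,s)=1$ forces $ax\not\equiv 0\pmod s$. Doubling and taking floors, I would invoke the elementary identity
$$\left\lfloor\frac{2ax}{s}\right\rfloor=2\left\lfloor\frac{ax}{s}\right\rfloor+\left\lfloor\frac{2\{ax\}_s}{s}\right\rfloor,$$
which isolates in the last summand the only contribution that is not manifestly even.

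The key step is to evaluate this residual floor. Since $\{ax\}_s\in[1,s-1]$, the quantity $2\{ax\}_s/s$ lies in the open interval $(0,2)$, so $\lfloor 2\{ax\}_s/s\rfloor\in\{0,1\}$, and it equals $1$ precisely when $2\{ax\}_s>s$, that is, when $\{ax\}_s>s/2$. Here the hypothesis that $s$ is odd is exactly what guarantees $2\{ax\}_s\ne s$, so the boundary case never arises and the dichotomy is clean. Summing over $x$ then gives
$$K=2\sum_{x=1}^{(s-1)/2}\left\lfloor\frac{ax}{s}\right\rfloor+\#\left\{1\le x\le\frac{s-1}{2}:\ \{ax\}_s>\frac{s}{2}\right\},$$
so that $K$ is congruent modulo $2$ to the cardinality in \eqref{Eq. the Jenkin identity}.

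Finally, applying Jenkins' identity \eqref{Eq. the Jenkin identity} yields $(-1)^K=\left(\frac{a}{s}\right)$, as claimed. The argument is entirely elementary, and there is no serious obstacle; the only point requiring any care is the evaluation of $\lfloor 2\{ax\}_s/s\rfloor$ together with the observation that the oddness of $s$ rules out the degenerate equality $\{ax\}_s=s/2$, which is precisely what makes the residual floor a faithful indicator of the Jenkins count.
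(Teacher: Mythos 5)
Your proof is correct and follows exactly the route the paper intends: the paper omits the verification, merely noting that the lemma follows "easily" from Jenkins' identity \eqref{Eq. the Jenkin identity}, and your decomposition $\lfloor 2ax/s\rfloor=2\lfloor ax/s\rfloor+\lfloor 2\{ax\}_s/s\rfloor$ is precisely the computation that reduces $K$ modulo $2$ to the Jenkins count. No gaps.
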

	
	The second author \cite[Lemma 1]{Pan} obtained the following result.
	
	\begin{lemma}\label{Lem. pan}
		Let $m\ge 2$ be an integer and $a\in\mathbb{Z}$ with $\gcd(a,m)=1$. Suppose that $1\le i<j\le m-1$ and $\{ai\}_m>\{aj\}_m$. Then 
		$$\left\lfloor\frac{aj}{m}\right\rfloor-\left\lfloor\frac{ai}{m}\right\rfloor-\left\lfloor\frac{a(j-i)}{m}\right\rfloor=1.$$
	\end{lemma}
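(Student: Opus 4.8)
The plan is to convert each floor into an expression in the residues $\{ai\}_m,\{aj\}_m,\{a(j-i)\}_m$ via the division identity $\lfloor ax/m\rfloor=(ax-\{ax\}_m)/m$, valid for every integer $x$. Writing $r_i=\{ai\}_m$, $r_j=\{aj\}_m$ and $r=\{a(j-i)\}_m$, the three floors become $(aj-r_j)/m$, $(ai-r_i)/m$ and $(a(j-i)-r)/m$, so the target alternating sum collapses: the genuinely $a$-dependent part $aj-ai-a(j-i)$ vanishes identically, leaving
$$\left\lfloor\frac{aj}{m}\right\rfloor-\left\lfloor\frac{ai}{m}\right\rfloor-\left\lfloor\frac{a(j-i)}{m}\right\rfloor=\frac{r_i-r_j+r}{m}.$$

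The heart of the argument is then to pin down the residue $r=\{a(j-i)\}_m$ exactly, not merely modulo $m$. Since $a(j-i)=aj-ai\equiv r_j-r_i\pmod m$, the number $r$ is the unique representative of $r_j-r_i$ in $[0,m-1]$. Here the hypothesis $\{ai\}_m>\{aj\}_m$ is decisive: it forces $r_j-r_i$ into $[-(m-1),-1]$, so exactly one multiple of $m$ must be added, giving $r=r_j-r_i+m\in[1,m-1]$. Substituting this into the displayed fraction yields $(r_i-r_j+r)/m=m/m=1$, which is the claim.

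The only subtlety, and the step I would be most careful about, is the determination $r=r_j-r_i+m$; everything else is routine rearrangement. I would also record that the constraints $1\le i<j\le m-1$ give $1\le j-i\le m-2$, so that $\gcd(a,m)=1$ guarantees $a(j-i)\not\equiv 0\pmod m$ and hence $r\ne 0$; this is automatically consistent with $r=r_j-r_i+m\in[1,m-1]$ and confirms that no degenerate case arises.
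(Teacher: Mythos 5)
Your argument is correct: the division identity $\lfloor ax/m\rfloor=(ax-\{ax\}_m)/m$ reduces the alternating sum to $(r_i-r_j+r)/m$, and the hypothesis $r_i>r_j$ pins down $r=r_j-r_i+m$, giving exactly $1$. The paper does not prove this lemma --- it is quoted from the second author's earlier work \cite{Pan} --- but your proof is the standard one and there is nothing to add.
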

	
	We also need the following result.
	
	\begin{lemma}\label{Lem. a sum of floor function}
		Let $m\ge2$ be an even integer and $a$ be a positive integer with $\gcd(a,m)=1$. Then 
		$$\sum_{x=1}^{(a-1)/2}\left\lfloor \frac{2mx}{a}\right\rfloor+\sum_{y=1}^{m}\left\lfloor \frac{ay}{2m}\right\rfloor=\frac{(a-1)m}{2}.$$
	\end{lemma}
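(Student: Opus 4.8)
\emph{Proof proposal.} The plan is to read both sums as counts of lattice points in a single rectangle, separated by a line through the origin. First I would record the arithmetic setup: since $m$ is even and $\gcd(a,m)=1$, the integer $a$ must be odd, so $(a-1)/2\in\mathbb{Z}$ and $\gcd(a,2m)=1$. This last fact is the engine of the argument, because $ay=2mx$ then forces $a\mid x$, and hence no lattice point $(x,y)$ with $1\le x\le a-1$ can lie on the line $ay=2mx$; every relevant point sits strictly to one side of it.

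Next I would introduce the rectangle $R=\{(x,y)\in\mathbb{Z}^2:\ 1\le x\le (a-1)/2,\ 1\le y\le m\}$, which contains exactly $\tfrac{(a-1)m}{2}$ lattice points, precisely the right-hand side of the identity. It then suffices to show that the first sum counts the points of $R$ lying below the line (those with $ay<2mx$) and the second sum counts those lying above it (those with $ay>2mx$); since no point lies on the line, these two subsets partition $R$.

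For the points below: for fixed $x$ with $1\le x\le (a-1)/2$ one has $2mx/a\le m(a-1)/a<m$, so the inequality $y<2mx/a$ already forces $y\le m$, and because $2mx/a\notin\mathbb{Z}$ the number of admissible $y$ equals $\lfloor 2mx/a\rfloor$; summing over $x$ yields the first sum. For the points above: for fixed $y$ with $1\le y\le m$ one has $ay/(2m)\le a/2$, whence $\lfloor ay/(2m)\rfloor\le (a-1)/2$, so the inequality $x<ay/(2m)$ automatically respects $x\le (a-1)/2$, and since $ay/(2m)\notin\mathbb{Z}$ the number of admissible $x$ equals $\lfloor ay/(2m)\rfloor$; summing over $y$ yields the second sum. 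Adding the two counts recovers all $\tfrac{(a-1)m}{2}$ points of $R$, giving the claimed equality.

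The only delicate points, which I expect to be the main (though still routine) obstacle, are the two boundary estimates $2mx/a<m$ and $ay/(2m)\le a/2$: they guarantee that the defining inequalities of $R$ never truncate either count, so that each floor term equals exactly the number of lattice points it should. The coprimality $\gcd(a,2m)=1$ is what simultaneously rules out equality on the line and keeps each floor honest (no lattice point lost to an integral value of $2mx/a$ or $ay/(2m)$).
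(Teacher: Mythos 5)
Your proof is correct and takes essentially the same approach as the paper: the paper's own (one-line) proof asserts that, ``by symmetry,'' the two sums count the lattice points of the region $\{(x,y):\ 0<x<a/2,\ 1\le y\le m\}$, which is exactly your rectangle $R$ split by the line $ay=2mx$. Your write-up merely supplies the details the paper leaves to the reader, namely that $\gcd(a,2m)=1$ rules out points on the line and keeps each floor value exact, and that the boundary estimates $2mx/a<m$ and $ay/(2m)\le a/2$ prevent truncation of either count.
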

	
	\begin{proof}
		By symmetry, one can verify that the sum
		$$\sum_{x=1}^{(a-1)/2}\left\lfloor \frac{2mx}{a}\right\rfloor+\sum_{y=1}^{m}\left\lfloor \frac{ay}{2m}\right\rfloor$$
		exactly counts the number of  integral points of the set 
		$$\left\{(x,y)\in\mathbb{R}\times\mathbb{R}:\ 0<x<a/2\ \text{and}\ 1\le y\le m\right\},$$
		where $\mathbb{R}$ denotes the field of all reals numbers. By this, our lemma holds immediately. 
	\end{proof}
	
	Now we are in a position to prove the following variant of Gauss lemma, which will play an important role in the proof of our second theorem.
	
	\begin{lemma}\label{Lem. Key lemma}
		Let $m\ge 2$ be an even integer and $a$ be a positive integer with $\gcd(a,m)=1$. Then 
		$$(-1)^{\frac{(a-1)(m-2)}{4}+\#\left\{1\le x\le \frac{m-2}{2}:\ \{ax\}_{2m}>m\right\}}=\left(\frac{2m}{a}\right).$$
	\end{lemma}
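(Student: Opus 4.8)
The plan is to translate the Jacobi symbol into the parity of floor sums and to feed in the two floor-sum identities already established. First I would note that since $m$ is even and $\gcd(a,m)=1$, the integer $a$ must be odd, so $\l(\frac{2m}{a}\r)$ is a genuine Jacobi symbol and splits as $\l(\frac{2m}{a}\r)=\l(\frac{2}{a}\r)\l(\frac{m}{a}\r)$. Applying Lemma~\ref{Lem. A in sec 2} with $a\mapsto m$ and $s\mapsto a$ gives $\l(\frac{m}{a}\r)=(-1)^{T}$, where $T=\sum_{x=1}^{(a-1)/2}\lfloor 2mx/a\rfloor$; Lemma~\ref{Lem. a sum of floor function} then rewrites $T=\tfrac{(a-1)m}{2}-U$ with $U=\sum_{y=1}^{m}\lfloor ay/(2m)\rfloor$. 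Hence $\l(\frac{m}{a}\r)=(-1)^{\frac{(a-1)m}{2}+U}$, and the whole lemma comes down to computing $U$ modulo $2$ and comparing it with the exponent on the left.

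The heart of the proof will be a Gauss-lemma-style evaluation of $U\bmod 2$ by pairing $y$ with $m-y$. Because $a$ is odd, $am\eq m\pmod{2m}$, so $\{a(m-y)\}_{2m}=m-\{ay\}_{2m}$ when $\{ay\}_{2m}<m$ and $\{a(m-y)\}_{2m}=3m-\{ay\}_{2m}$ when $\{ay\}_{2m}>m$; in particular the condition $\{ay\}_{2m}>m$ is invariant under $y\mapsto m-y$. From $\frac{ay}{2m}+\frac{a(m-y)}{2m}=\frac a2$ together with these residue sums, a pair contributes $\lfloor ay/(2m)\rfloor+\lfloor a(m-y)/(2m)\rfloor=\tfrac{a-1}{2}$ when $\{ay\}_{2m}<m$ and $\tfrac{a-3}{2}$ when $\{ay\}_{2m}>m$, i.e. $\tfrac{a-1}{2}$ shifted by $1$ modulo $2$ in the latter case. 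Summing over the pairs $y=1,\cs,\tfrac{m}{2}-1$ and adding the two unpaired terms $\lfloor a(m/2)/(2m)\rfloor=\lfloor a/4\rfloor$ and $\lfloor am/(2m)\rfloor=\tfrac{a-1}{2}$, one obtains $U\eq \tfrac{m(a-1)}{4}+\lfloor a/4\rfloor+C\pmod 2$, where $C=\#\{1\ls x\ls \tfrac{m-2}{2}:\ \{ax\}_{2m}>m\}$ is exactly the count in the statement.

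Combining everything with the supplementary law $\l(\frac 2a\r)=(-1)^{(a^2-1)/8}$ gives $\l(\frac{2m}{a}\r)=(-1)^{E}$ with $E=\tfrac{a^2-1}{8}+\tfrac{(a-1)m}{2}+\tfrac{m(a-1)}{4}+\lfloor a/4\rfloor+C$, while the target exponent is $\tfrac{(a-1)(m-2)}{4}+C=\tfrac{(a-1)m}{4}-\tfrac{a-1}{2}+C$. Cancelling $C$ and the common term $\tfrac{m(a-1)}{4}$, and using that $\tfrac{(a-1)m}{2}=\tfrac{a-1}{2}\cdot m$ is even (as $a$ is odd and $m$ is even), the claim collapses to the purely arithmetic congruence $\tfrac{a^2-1}{8}+\lfloor a/4\rfloor+\tfrac{a-1}{2}\eq0\pmod 2$ for odd $a$, which is periodic of period $8$ in $a$ and is verified by inspection at $a\eq1,3,5,7\pmod 8$. (Lemma~\ref{Lem. pan} is not needed for this route.) I expect the main obstacle to be the second paragraph: getting both the pairing dichotomy and, above all, the two leftover terms $\lfloor a/4\rfloor$ and $\tfrac{a-1}{2}$ exactly right, since it is precisely these that encode the factor $\l(\frac 2a\r)$ and make the final elementary congruence close up.
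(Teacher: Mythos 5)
Your proposal is correct and follows essentially the same route as the paper: both arguments funnel $\bigl(\tfrac{2m}{a}\bigr)$ through Lemma \ref{Lem. A in sec 2} and Lemma \ref{Lem. a sum of floor function} into the sum $\sum_{y}\lfloor ay/(2m)\rfloor$, and both relate that sum to the count $\#\{1\le x\le \tfrac{m-2}{2}:\{ax\}_{2m}>m\}$ via the identity $\lfloor a/2\rfloor-\lfloor ax/(2m)\rfloor-\lfloor a(m-x)/(2m)\rfloor\in\{0,1\}$ according as $\{ax\}_{2m}<m$ or $>m$. The only cosmetic differences are that you re-derive this last identity directly from fractional parts instead of quoting Lemma \ref{Lem. pan}, and you close the $\bigl(\tfrac{2}{a}\bigr)$ bookkeeping by a single period-$8$ congruence check rather than the paper's case split on $a\bmod 4$.
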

	
	\begin{proof}
		Note that $\{am\}_{2m}=m$. By Lemma \ref{Lem. pan}, one can verify that 
		\begin{align}\label{Eq. a in the proof of Lem. Key lemma}
		    &\#\left\{1\le x\le \frac{m-2}{2}:\ \{ax\}_{2m}>m\right\}\notag\\
           =&\sum_{x=1}^{(m-2)/2}\left(\left\lfloor\frac{am}{2m}\right\rfloor-\left\lfloor\frac{ax}{2m}\right\rfloor-\left\lfloor\frac{a(m-x)}{2m}\right\rfloor\right)\notag\\
		   =&\frac{(a-1)(m-2)}{4}+\left\lfloor\frac{a}{4}\right\rfloor-\sum_{y=1}^{m-1}\left\lfloor\frac{ay}{2m}\right\rfloor.
		\end{align}
	Meanwhile, by Lemma \ref{Lem. a sum of floor function}
	\begin{equation}\label{Eq. b in the proof of Lem. Key lemma}
		\sum_{x=1}^{(a-1)/2}\left\lfloor \frac{2mx}{a}\right\rfloor+\sum_{y=1}^{m-1}\left\lfloor \frac{ay}{2m}\right\rfloor=\frac{(a-1)(m-1)}{2}.
	\end{equation}
	
	Suppose first $a\equiv 1\pmod 4$. Combining (\ref{Eq. a in the proof of Lem. Key lemma}) and (\ref{Eq. b in the proof of Lem. Key lemma}) with Lemma \ref{Lem. A in sec 2}, we obtain 
	\begin{align*}
		  (-1)^{\#\left\{1\le x\le \frac{m-2}{2}:\ \{ax\}_{2m}>m\right\}}
		&=(-1)^{\frac{a-1}{4}+\sum_{y=1}^{m-1}\left\lfloor \frac{ay}{2m}\right\rfloor}\\
		&=(-1)^{\frac{a-1}{4}+\sum_{x=1}^{(a-1)/2}\left\lfloor \frac{2mx}{a}\right\rfloor}\\
		&=\left(\frac{2m}{a}\right).
	\end{align*}
	
	Now consider the case $a\equiv 3\pmod 4$. By (\ref{Eq. a in the proof of Lem. Key lemma}), (\ref{Eq. b in the proof of Lem. Key lemma}) and Lemma \ref{Lem. A in sec 2} again,
	\begin{align*}
		(-1)^{\#\left\{1\le x\le \frac{m-2}{2}:\ \{ax\}_{2m}>m\right\}}
		&=(-1)^{\frac{m-2}{2}+\frac{a-3}{4}+\sum_{y=1}^{m-1}\left\lfloor \frac{ay}{2m}\right\rfloor}\\
		&=(-1)^{\frac{m-2}{2}+\frac{a-3}{4}+1+\sum_{x=1}^{(a-1)/2}\left\lfloor \frac{2mx}{a}\right\rfloor}\\
		&=(-1)^{\frac{m-2}{2}}\cdot \left(\frac{2m}{a}\right).
	\end{align*}
	
	In view of the above, we have completed the proof.
	\end{proof}

	\subsection{Some known results on quadratic Gauss sums over $\mathbb{Z}/m\mathbb{Z}$}
	
	Let $p>2$ be an odd prime. Gauss first determined the explicit value of the quadratic Gauss sum over $\mathbb{Z}/p\mathbb{Z}$, that is, 
	$$\sum_{x\in\mathbb{Z}/p\mathbb{Z}}\left(\frac{x}{p}\right)e^{2\pi{\bm i}x/p}=\sum_{x\in\mathbb{Z}/p\mathbb{Z}}e^{2\pi{\bm i}x^2/p}=\sqrt{(-1)^{\frac{p-1}{2}}p}.$$
	
	The above result can be extended to the quadratic Gauss sum over $\mathbb{Z}/m\mathbb{Z}$, where $m$ is an arbitrary positive integer (cf. \cite[pp. 85-87]{Serge-Lang}). 
	
	\begin{lemma}\label{Lem. serge lang}
		Let $m$ be a positive integer. Then 
		$$\sum_{x\in\mathbb{Z}/m\mathbb{Z}}e^{2\pi{\bm i}x^2/m}=\begin{cases}
			0                            &  \mbox{if}\ m\equiv 2\pmod 4,\\
			(1+{\bm i})\sqrt{m}          &  \mbox{if}\ m\equiv 0\pmod 4,\\
			\sqrt{(-1)^{\frac{m-1}{2}}m} &  \mbox{if}\ m\equiv 1\pmod 2.
		\end{cases}$$
	Moreover, if $m$ is a positive odd integer, then 
	$$\sum_{x\in\mathbb{Z}/m\mathbb{Z}}e^{2\pi{\bm i}ax^2/m}=\left(\frac{a}{m}\right)\cdot \sum_{x\in\mathbb{Z}/m\mathbb{Z}}e^{2\pi{\bm i}x^2/m}$$
	for any $a\in\mathbb{Z}$ with $\gcd(a,m)=1$. 
	\end{lemma}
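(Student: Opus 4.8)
The plan is to split the statement into two parts---the exact evaluation of $G(m):=\sum_{x\in\mathbb{Z}/m\mathbb{Z}}e^{2\pi{\bm i}x^2/m}$, and the multiplicative ``moreover'' identity---and to organize both around a reduction to prime-power moduli via the Chinese Remainder Theorem. Writing $G(a,m):=\sum_{x\in\mathbb{Z}/m\mathbb{Z}}e^{2\pi{\bm i}ax^2/m}$, and factoring $m=m_1m_2$ with $\gcd(m_1,m_2)=1$, the substitution $x\mapsto m_2u+m_1v$ (with $u$ running mod $m_1$ and $v$ mod $m_2$, a bijection onto $\mathbb{Z}/m\mathbb{Z}$ by CRT) splits the quadratic form, and the cross term in the exponent equals the integer $2auv$, hence contributes trivially. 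This yields the twisted multiplicativity $G(a,m_1m_2)=G(am_2,m_1)\,G(am_1,m_2)$, which is the organizing identity for everything below.

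For the modulus and the vanishing case I would compute $|G(a,m)|^2=\sum_{x,y}e^{2\pi{\bm i}a(x^2-y^2)/m}$ and put $x=y+h$, turning it into $\sum_h e^{2\pi{\bm i}ah^2/m}\sum_y e^{2\pi{\bm i}\cdot 2ahy/m}$; the inner sum equals $m$ exactly when $m\mid 2ah$ and $0$ otherwise. Tracking the surviving $h$ in each residue class modulo $4$ gives $|G|=\sqrt m$ for $m$ odd, $|G|=\sqrt{2m}$ for $m\equiv0\pmod4$, and, for $m\equiv2\pmod4$, two surviving terms that cancel since $\left(\frac{m}{2}\right)$ is odd; in particular the case $m\equiv2\pmod4$ of the lemma is completely settled here.

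For the odd ``moreover'' identity I would first treat $m=p$ an odd prime by expanding $G(a,p)=\sum_t r(t)e^{2\pi{\bm i}at/p}$, where $r(t)=1+\left(\frac{t}{p}\right)$ is the number of $x$ with $x^2\equiv t$; the principal-character part $\sum_t e^{2\pi{\bm i}at/p}$ vanishes, and the substitution $t\mapsto a^{-1}t$ in the remaining sum extracts the factor $\left(\frac{a}{p}\right)$, giving $G(a,p)=\left(\frac{a}{p}\right)G(1,p)$. For odd prime powers the grouping $x=x_0+p^{k-1}t$ forces $p\mid x_0$ and rescales to produce the recursion $G(a,p^k)=p\,G(a,p^{k-2})$, reducing every case to $k\in\{0,1\}$. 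The twisted multiplicativity then upgrades the symbol factor from prime moduli to all odd $m$: since the Jacobi symbol is multiplicative and the reciprocity cross terms $\left(\frac{m_2}{m_1}\right)\left(\frac{m_1}{m_2}\right)$ cancel in the ratio $G(a,m)/G(1,m)$, one obtains $G(a,m)=\left(\frac{a}{m}\right)G(1,m)$.

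The remaining and genuinely hard ingredient is the exact value, i.e.\ the \emph{argument} of $G(1,m)$, which is precisely Gauss's notoriously delicate sign determination. I expect this to be the main obstacle, since the modulus alone leaves a fourth-root-of-unity ambiguity that no elementary cancellation resolves. I would pin it down analytically: applying Poisson summation to a Gaussian on the line (equivalently, the functional equation of the Jacobi theta function) evaluates $G(1,m)$ together with its phase in one stroke and naturally produces the $(1+{\bm i})$ normalization in the case $m\equiv0\pmod4$. Alternatively, one may invoke Gauss's classical theorem $G(1,p)=\sqrt{(-1)^{(p-1)/2}p}$ for odd primes, compute the small seed $G(1,4)=2(1+{\bm i})$ directly, and propagate these through the recursion and the factorization $m=2^e m'$. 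The only point requiring real care is then the bookkeeping of the phase across the $2$-adic and odd parts, after which the three displayed cases assemble immediately.
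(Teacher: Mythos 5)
Your proposal is correct in its essentials, but note that the paper does not prove this lemma at all: it is quoted as a known evaluation of the quadratic Gauss sum over $\mathbb{Z}/m\mathbb{Z}$ with a citation to Lang (pp.~85--87), so there is no in-paper argument to compare against. What you have written is essentially the classical proof, and the individual steps check out: the CRT substitution $x\mapsto m_2u+m_1v$ does give the twisted multiplicativity $G(a,m_1m_2)=G(am_2,m_1)G(am_1,m_2)$ because the cross term $2auv$ is integral; the $|G|^2$ computation correctly isolates $h=0$ (and $h=m/2$ when $m$ is even), and for $m\equiv 2\pmod 4$ the second term is $e^{\pi{\bm i}a(m/2)}=-1$ since $a$ and $m/2$ are both odd, which settles that case completely; the prime case via $r(t)=1+\left(\frac{t}{p}\right)$ and the recursion $G(a,p^k)=p\,G(a,p^{k-2})$ are standard and correct; and the cancellation of $\left(\frac{m_1}{m_2}\right)\left(\frac{m_2}{m_1}\right)$ in the ratio $G(a,m)/G(1,m)$ does yield the ``moreover'' identity for all odd $m$ by induction (using $G(1,m)\neq 0$, which your modulus computation supplies). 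You are also right that the only genuinely delicate point is the argument of $G(1,m)$; your primary route (Poisson summation / the theta functional equation, i.e.\ Dirichlet's method) handles all residue classes of $m$ uniformly and produces the $(1+{\bm i})$ factor for $4\mid m$, while your fallback route (seed values $G(1,p)$ and $G(1,4)=2(1+{\bm i})$ propagated through the recursions) requires quadratic reciprocity to match the branch of $\sqrt{(-1)^{(m-1)/2}m}$ across the factorization --- a bookkeeping step you flag but do not carry out. That is the only part of the proposal that is a sketch rather than a proof; everything else is complete.
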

	
    To state our next result, we introduce the Kronecker symbol, which is indeed an extension of the Jacobi symbol. In fact, the Kronecker symbol is a function 
    $$\left(\frac{\bullet}{\bullet}\right):\ \left(\mathbb{Z}\setminus\{0\}\right)\times \left(\mathbb{Z}\setminus\{0\}\right) \longrightarrow \left\{-1,0,1\right\}$$
    defined by the following conditions:
    
    (i) $\left(\frac{b}{a_1}\right)\left(\frac{b}{a_2}\right)=\left(\frac{b}{a_1a_2}\right)$ for any nonzero integers $a_1,a_2,b$;
    
    (ii) If $a$ is a positive  odd integer, then $(\frac{b}{a})$ coincides with the Jacobi symbol;
    
    (iii) For any nonzero integer $b$, we have $(\frac{b}{-1})=\sign{b}=b/|b|$; 
    
    (iv) For any nonzero integer $b$, we have 
    $$\left(\frac{b}{2}\right)=\begin{cases}
    	            0  &  \mbox{if}\ b\equiv 0\pmod 2,\\
    	(\frac{2}{b})  &  \mbox{if}\ b\equiv 1\pmod 2. 
    \end{cases}$$
    
    Now we state the following known result on character sums related to the Kronecker symbol (cf. \cite[Corollary 2.1.47]{Cohen}).
	
	\begin{lemma}\label{Lem. Cohen}
			Let $K$ be a quadratic number field with discriminant $D$ and let $m=|D|$.  Then 
		$$\sqrt{D}=\sum_{x\in(\mathbb{Z}/m\mathbb{Z})\setminus\{0 \mod m\mathbb{Z}\}}\left(\frac{D}{x}\right)e^{2\pi{\bm i}x/m}.$$
	\end{lemma}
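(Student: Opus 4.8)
The plan is to recognize the right-hand side as the Gauss sum of the primitive real Dirichlet character attached to $D$, to reduce the problem to the determination of a single sign, and then to fix that sign by multiplicativity, treating each prime discriminant by Lemma \ref{Lem. serge lang} together with quadratic reciprocity.

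First I would set $\chi_D:=\left(\frac{D}{\cdot}\right)$ and write $\tau:=\sum_{x}\chi_D(x)e^{2\pi{\bm i}x/m}$ for the sum in the statement. Since $D$ is a fundamental discriminant, $\chi_D$ is a primitive real character modulo $m=|D|$, so $\chi_D(x)=0$ unless $\gcd(x,m)=1$, and the usual computation of $\tau\overline{\tau}$ for a primitive character gives $|\tau|^2=m$, while the twisted computation gives $\tau^2=\chi_D(-1)\,m$. By property (iii) of the Kronecker symbol, $\chi_D(-1)=\left(\frac{D}{-1}\right)=\sign{D}$, hence $\tau^2=\sign{D}\cdot m=D$ and $\tau=\pm\sqrt D$. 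Thus the whole content of the lemma is the determination of this sign.

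Next I would fix the sign by multiplicativity. Factor $D$ uniquely into coprime prime discriminants $D=\prod_j D_j$, where each $D_j$ is either $p^\ast:=(-1)^{(p-1)/2}p$ for an odd prime $p\mid D$, or one of $-4,8,-8$; correspondingly $\chi_D=\prod_j\chi_{D_j}$ is a product of primitive characters of pairwise coprime conductors $m_j=|D_j|$ with $m=\prod_j m_j$. The Chinese Remainder Theorem factors the Gauss sum as $\tau(\chi_D)=\prod_j\chi_{D_j}\!\left(m/m_j\right)\cdot\prod_j\tau(\chi_{D_j})$, where the cross-factors are real since each $\chi_{D_j}$ is real. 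For an odd prime $p$, reciprocity together with its first supplement gives $\chi_{p^\ast}(x)=\left(\frac{x}{p}\right)$ whenever $\gcd(x,p)=1$, so that $\tau(\chi_{p^\ast})=\sum_{x}\left(\frac{x}{p}\right)e^{2\pi{\bm i}x/p}=\sum_{x}e^{2\pi{\bm i}x^2/p}$; by Lemma \ref{Lem. serge lang} this equals $\sqrt{(-1)^{(p-1)/2}p}=\sqrt{p^\ast}$. The three remaining prime discriminants are handled by a direct four-term evaluation, yielding $2{\bm i}=\sqrt{-4}$, $2\sqrt2=\sqrt8$ and $2{\bm i}\sqrt2=\sqrt{-8}$. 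Substituting these values into the factorization reduces the claim to checking that $\prod_j\sqrt{D_j}$, combined with the cross-factors $\chi_{D_j}(m/m_j)$, reproduces $+\sqrt D$ on the correct branch.

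I expect the main obstacle to be precisely this last consistency check. The cross-factors $\chi_{D_j}(m/m_j)$ and the signs introduced when multiplying the complex square roots $\sqrt{D_j}$ (whose arguments are $0$ or $\pi/2$) must cancel so that the product lands on $+\sqrt D$ rather than $-\sqrt D$; unwinding this cancellation is exactly an application of quadratic reciprocity and the supplementary laws, so it is the bookkeeping, rather than any single hard estimate, where the real work lies. I would organize it by splitting according to the parity of the number of negative $D_j$ and the residue of the odd part of $m$ modulo $4$, matching the branch of $\sqrt D$ against the accumulated reciprocity signs. Note that the deep input on the sign of the quadratic Gauss sum is already supplied by Lemma \ref{Lem. serge lang}, so the argument needs only the elementary reciprocity identities beyond it.
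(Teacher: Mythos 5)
The paper does not actually prove Lemma \ref{Lem. Cohen}: it is imported as a known result with a pointer to Cohen's book, so there is no in-paper argument to compare yours against. Your proposal is the standard textbook proof of that cited fact, and its structure is sound: you identify the sum as the Gauss sum $\tau(\chi_D)$ of the primitive real character attached to the fundamental discriminant $D$, obtain $\tau^2=\chi_D(-1)\,m=D$ from primitivity (using property (iii) of the Kronecker symbol for $\chi_D(-1)=\sign{D}$), and then pin down the branch by factoring $D$ into coprime prime discriminants, invoking the multiplicativity $\tau(\chi_1\chi_2)=\chi_1(m_2)\chi_2(m_1)\tau(\chi_1)\tau(\chi_2)$, Lemma \ref{Lem. serge lang} for each odd prime discriminant $p^{\ast}$, and the direct four-term evaluations $2{\bm i}=\sqrt{-4}$, $2\sqrt{2}=\sqrt{8}$, $2{\bm i}\sqrt{2}=\sqrt{-8}$. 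The one step you defer --- checking that the cross-factors $\chi_{D_j}(m/m_j)$ exactly compensate the sign $-1$ produced each time the principal square roots of two negative prime discriminants are multiplied --- is indeed where the remaining content lies, and it does close: inducting on the number of prime discriminant factors, the required identity is that $\chi_{D_1}(|D_2|)\,\chi_{D_2}(|D_1|)=-1$ precisely when $D_1<0$ and $D_2<0$, which is quadratic reciprocity together with its supplements rewritten for the Kronecker symbol. So your plan is correct and complete in outline; what it buys is a self-contained derivation from Lemma \ref{Lem. serge lang} of a statement the authors chose to use as a black box.
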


	\subsection{Circulant and almost circulant matrices}
	
	Let $m\ge2$ be an integer and let 
	$$\v=(c_0,c_1,\cdots,c_{m-1})\in\mathbb{C}^m.$$ Then the {\it circulant matrix} $C(\v)$ of the vector $\v$ is an $m\times m$ matrix defined by 
	$$C(\v):=\left[c_{j-i}\right]_{0\le i,j\le m-1},$$
	where $c_x=c_{\{x\}_m}$ for any integer $x$. More precisely, 
	$$C(\v)=\left[\begin{array}{ccccc}
		c_0       &  c_1     & \cdots  & c_{m-2}  & c_{m-1} \\
		c_{m-1}   &  c_0     & \cdots  & c_{m-3}  & c_{m-2} \\
		\vdots    &  \vdots  & \ddots  & \vdots   & \vdots  \\
		c_2       &  c_3     & \cdots  & c_0      & c_1     \\
		c_1       &  c_2     & \cdots  & c_{m-1}  & c_0
	\end{array}\right].$$
	 Circulant matrices have extensively applications in number theory, combinatorics, as well as coding theory. Readers may refer to the survey paper \cite{Kra} for more details on circulant matrices.
	
	In 2025, the first author and Wang investigated the matrix 
	$$W(\v):=\left[c_{j-i}\right]_{1\le i,j\le m-1},$$
	which is obtained by deleting the first row and the first column of $C(\v)$. The matrix $W(\v)$ is called the {\it almost circulant matrix} of $\v$. For $W(\v)$, the first author and Wang \cite[Theorem 4.1]{Wu-Wang} obtained the following result.
	
	\begin{lemma}\label{Lem. Wu and Wang}
		Let notations be as above and let $\lambda_1,\lambda_2,\cdots,\lambda_m$ be exactly all the eigenvalues of the circulant matrix $C(\v)$. Then 
		$$\det W(\v)=\frac{1}{m}\sum_{l=1}^{m}\prod_{k\in[1,m]\setminus\{l\}}\lambda_k.$$
		In particular, if $\lambda_m=0$, then 
		$$\det W(\v)=\frac{1}{m}\lambda_1\lambda_2\cdots\lambda_{m-1}.$$
	\end{lemma}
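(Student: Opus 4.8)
The plan is to realize $\det W(\v)$ as a principal minor of the circulant matrix $C(\v)$ and then to play two classical facts off against each other: the cyclic symmetry of circulants, which forces all the principal $(m-1)\times(m-1)$ minors of $C(\v)$ to coincide, and the expansion of the characteristic polynomial, whose linear coefficient simultaneously encodes the sum of those minors and the elementary symmetric function $e_{m-1}$ of the eigenvalues.

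First I would set up the symmetry. Let $P$ be the $m\times m$ cyclic permutation matrix, so that the defining feature of a circulant is $P^{-1}C(\v)P=C(\v)$; equivalently, $C(\v)$ is invariant under the simultaneous cyclic shift $\sigma$ of its row and column indices. For $1\le i\le m$, let $M_i$ denote the principal submatrix obtained by deleting the $i$-th row and $i$-th column of $C(\v)$, so that $M_1=W(\v)$ in the indexing of the lemma. Deleting index $i$ and then applying $\sigma$ carries the index set $[1,m]\setminus\{i\}$ bijectively onto $[1,m]\setminus\{\sigma(i)\}$, and since $\sigma$ acts on the surviving indices as a symmetric (simultaneous row-and-column) permutation that leaves $C(\v)$ fixed, we obtain $\det M_i=\det M_{\sigma(i)}$. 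As $\sigma$ is an $m$-cycle, all the $M_i$ have the same determinant, namely $\det W(\v)$.

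Next I would invoke the coefficient identity for the characteristic polynomial $\chi(t)=\det(tI_m-C(\v))$. On the one hand, $\chi(t)=\prod_{k=1}^{m}(t-\lambda_k)$, so the coefficient of $t$ equals $(-1)^{m-1}e_{m-1}=(-1)^{m-1}\sum_{l=1}^{m}\prod_{k\in[1,m]\setminus\{l\}}\lambda_k$. On the other hand, the coefficient of $t^{1}$ in $\chi(t)$ is $(-1)^{m-1}$ times the sum of all the $(m-1)\times(m-1)$ principal minors of $C(\v)$, that is, $(-1)^{m-1}\sum_{i=1}^{m}\det M_i$. Equating and cancelling $(-1)^{m-1}$ gives $\sum_{i=1}^{m}\det M_i=\sum_{l=1}^{m}\prod_{k\in[1,m]\setminus\{l\}}\lambda_k$, and combining this with $\det M_i=\det W(\v)$ from the previous step yields $m\,\det W(\v)=\sum_{l=1}^{m}\prod_{k\in[1,m]\setminus\{l\}}\lambda_k$, which is exactly the asserted formula.

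Finally, the ``in particular'' statement is immediate once $\lambda_m=0$: each summand $\prod_{k\neq l}\lambda_k$ with $l\neq m$ retains the factor $\lambda_m$ and hence vanishes, so only the term $l=m$ survives and $\det W(\v)=\tfrac1m\lambda_1\lambda_2\cdots\lambda_{m-1}$. The one point that needs genuine care is the cyclic-symmetry step: one must check cleanly that conjugation by $P$ permutes the deleted index while preserving $C(\v)$, so that the principal minors are \emph{literally equal} and not merely equal up to sign. Everything after that is standard bookkeeping with symmetric functions of the eigenvalues.
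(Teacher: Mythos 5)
Your proof is correct. Note that the paper itself offers no proof of this lemma---it is quoted from \cite[Theorem 4.1]{Wu-Wang}---so there is no in-paper argument to compare against; judged on its own, your reasoning is complete: the conjugation $P^{-1}C(\v)P=C(\v)$ does show that all $m$ principal $(m-1)\times(m-1)$ minors are literally equal (a simultaneous row-and-column permutation preserves determinants), and matching the $t^{1}$-coefficient of $\det(tI_m-C(\v))$ against $(-1)^{m-1}e_{m-1}(\lambda_1,\dots,\lambda_m)$ then gives $m\det W(\v)=\sum_{l}\prod_{k\neq l}\lambda_k$, with the $\lambda_m=0$ case following since every summand with $l\neq m$ contains the factor $\lambda_m$. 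An equivalent packaging, worth knowing, is that $\det W(\v)$ is the $(1,1)$-entry of $\mathrm{adj}(C(\v))$, which is itself a circulant and hence has all diagonal entries equal to $\frac1m\operatorname{tr}\mathrm{adj}(C(\v))=\frac1m e_{m-1}(\lambda_1,\dots,\lambda_m)$; this is the same computation in different clothing.
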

	
	\subsection{Some necessary lemmas} 
	
	Here we introduce some lemmas, which will be used frequently in the proofs of main theorems. Recall that $q=2n+1$ is an odd prime power and $\chi_q$ is a generator of $\widehat{\mathbb{F}_q^{\times}}$. We begin with the following result. 
	
	\begin{lemma}\label{Lem. transformation formula of Jacobi sums}
		Let notations be as above. Then 
		$$J_q(\chi_q^i,\chi_q^j)=(-1)^i\cdot J_q(\chi_q^i,\chi_q^{-(i+j)})$$
		for any integers $i$ and $j$. In particular, 
		$$J_q(\phi,\chi_q^j)=\phi(-1)\cdot J_q(\phi,\chi_q^{\pm n-j}).$$
	\end{lemma}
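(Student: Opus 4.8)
The plan is to prove the equivalent multiplicative form $J_q(\chi,\lambda)=\chi(-1)\,J_q(\chi,(\chi\lambda)^{-1})$ for $\chi=\chi_q^i$ and $\lambda=\chi_q^j$, from which the stated identity follows at once. Indeed, since $\chi_q$ generates $\widehat{\mathbb F_q^\times}$ and $-1$ is the unique element of order $2$ in $\mathbb F_q^\times$, we have $\chi_q(-1)=-1$, so $\chi_q^i(-1)=(-1)^i$; moreover $(\chi_q^i\chi_q^j)^{-1}=\chi_q^{-(i+j)}$. Thus the two formulations coincide. I would first dispose of the generic case in which $\chi$, $\lambda$ and $\chi\lambda$ are all nontrivial, and then handle the finitely many degenerate configurations separately.

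For the generic case the key is a single substitution. Consider the map $x\mapsto t$ determined by $x=-t/(1-t)$, with inverse $t=x/(x-1)$; this is a bijection of $\mathbb F_q\setminus\{0,1\}$ onto itself (the excluded points $0$ and $1$ are interchanged with $0$ and $\infty$ on $\mathbb P^1$). A direct computation gives $1-x=1/(1-t)$, hence
\begin{align*}
\chi(x)\,\lambda(1-x)
&=\chi\!\left(\frac{-t}{1-t}\right)\lambda\!\left(\frac{1}{1-t}\right)\\
&=\chi(-1)\,\chi(t)\,\chi(1-t)^{-1}\lambda(1-t)^{-1}\\
&=\chi(-1)\,\chi(t)\,(\chi\lambda)^{-1}(1-t).
\end{align*}
Because $\chi$, $\lambda$ and $\chi\lambda$ are nontrivial, both $J_q(\chi,\lambda)$ and $J_q(\chi,(\chi\lambda)^{-1})$ receive contributions only from $x\in\mathbb F_q\setminus\{0,1\}$; summing the displayed identity over this set and factoring out $\chi(-1)$ therefore yields $J_q(\chi,\lambda)=\chi(-1)\,J_q(\chi,(\chi\lambda)^{-1})$.

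It then remains to check the degenerate cases, where I would simply invoke the elementary evaluations $J_q(\varepsilon,\varepsilon)=q-2$, $J_q(\varepsilon,\psi)=J_q(\psi,\varepsilon)=-1$ for a nontrivial $\psi$, and $J_q(\psi,\overline\psi)=-\psi(-1)$ for a nontrivial $\psi$ (here $\varepsilon=\chi_q^0$ is the principal character). A short case analysis ($\chi$ trivial; $\lambda$ trivial; or $\chi\lambda$ trivial with $\chi,\lambda$ nontrivial, i.e.\ $\lambda=\overline\chi$) confirms that both sides agree in each configuration. Finally, the special case follows by taking $i=n$: since $q-1=2n$ we have $\phi=\chi_q^n$ and $\chi_q^{-(n+j)}=\chi_q^{n-j}$ (because $-n\equiv n\pmod{2n}$), while $(-1)^n=\phi(-1)$, giving $J_q(\phi,\chi_q^j)=\phi(-1)\,J_q(\phi,\chi_q^{n-j})$. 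I expect the only real care to be needed in the bookkeeping of the degenerate cases and in verifying that the Möbius substitution is genuinely a bijection compatible with the convention $\chi_q^k(0)=0$; the generic identity itself is immediate once the substitution is in place. (An alternative route for the generic case expresses $J_q$ through Gauss sums and uses $g(\psi)g(\overline\psi)=\psi(-1)q$, but the substitution avoids introducing Gauss sums altogether.)
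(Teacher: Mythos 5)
Your proof is correct and follows essentially the same route as the paper's: a single M\"obius substitution in the defining sum (the paper works with $x\mapsto (x-1)/x$, you with $t\mapsto t/(t-1)$), followed by pulling out $\chi(-1)=(-1)^i$. The only difference is that the paper's convention $\chi_q^k(0)=0$ for \emph{every} $k$, including the trivial character, lets the substitution handle all configurations uniformly, so your separate bookkeeping of the degenerate cases, while correct and consistent with that convention (e.g.\ $J_q(\varepsilon,\varepsilon)=q-2$), is not actually needed.
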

	
	\begin{proof}
		By the definition of Jacobi sums, one can verify that 
		\begin{align*}
			(-1)^i\cdot J_q(\chi_q^i,\chi_q^{-(i+j)})
			&=\sum_{x\in\mathbb{F}_q^{\times}}\chi_q^i(x-1)\chi_q^{i+j}\left(\frac{1}{x}\right)\\
			&=\sum_{x\in\mathbb{F}_q^{\times}}\chi_q^i\left(1-\frac{1}{x}\right)\chi_q^j\left(\frac{1}{x}\right)\\
			&=\sum_{x\in\mathbb{F}_q^{\times}}\chi_q^i(1-x)\chi_q^j(x)\\
			&=J_q(\chi_q^i,\chi_q^j).
		\end{align*}
		This completes the proof.
	\end{proof}

	Recall that 
	$$\mathcal{S}_q=\left\{s_0=0,s_1=1,s_2,\cdots,s_n\right\}=\left\{x^2:\ x\in\mathbb{F}_q\right\}.$$
	Throughout out the remaining part of this paper, for $k=1,2,\cdots,n$, we define 
	$$\lambda_k(d):=\sum_{s\in\mathcal{S}_q}\phi(1+ds)\chi_q^k(s).$$
	Using these notations, we have the following result.
	
	\begin{lemma}\label{Lem. eigenvaules of matrix Mq(d)}
		Let $d\in\mathbb{F}_q^{\times}$ and let the matrix 
		$$M_q(d):=\left[\phi(s_i+ds_j)\right]_{1\le i,j\le n}.$$
		Then the following results hold.
		
		{\rm (i)} $\lambda_1(d),\lambda_2(d),\cdots,\lambda_n(d)$ are exactly all the eigenvalues of $M_q(d)$. 
		
		{\rm (ii)} Moreover, if $d\in\mathbb{F}_q^{\times}\setminus\mathcal{S}_q$, then for any $k=1,2,\cdots,n$, we have 
		$$\chi_q^k(d)\cdot \lambda_k(d)=\frac{(-1)^k}{2}\left(J_q(\phi,\chi_q^k)-J_q(\phi,\chi_q^{-k})\right).$$
	\end{lemma}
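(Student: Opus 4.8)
The plan is to prove the two parts of Lemma~\ref{Lem. eigenvaules of matrix Mq(d)} by linking the matrix $M_q(d)$ to a circulant structure, exploiting the fact that the squares $\mathcal{S}_q\setminus\{0\}$ form the image of the index-two subgroup of $\mathbb{F}_q^\times$ under squaring. For part~(i), I would first observe that the multiplicative characters diagonalize the combinatorial action hidden in $M_q(d)$. Concretely, write each nonzero square as $s_i = g^{2(i-1)}$ for a fixed generator $g$ of $\mathbb{F}_q^\times$ (so that $s_1=1$); then $\phi(s_i+ds_j)=\phi(s_i)\,\phi(1+d s_j/s_i)=\phi(1+d s_j s_i^{-1})$ since each $s_i$ is a nonzero square and $\phi(s_i)=1$. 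This shows $M_q(d)$ is, after the identification $s_i \leftrightarrow i$, a \emph{group-circulant} matrix on the cyclic group $\mathcal{S}_q^\times = \langle g^2\rangle$ of order $n$: its $(i,j)$ entry depends only on the ratio $s_j s_i^{-1}$.

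The key step is then to diagonalize this group-circulant by the characters of $\mathcal{S}_q^\times$. The restrictions $\chi_q^k|_{\mathcal{S}_q^\times}$ for $k=1,\dots,n$ run over all $n$ characters of the order-$n$ group $\mathcal{S}_q^\times$ (because $\chi_q$ has order $q-1=2n$, its square has order $n$, and $\chi_q^k$ restricted to the squares factors through $\chi_q^{k}|_{\mathcal{S}_q^\times}$, giving all $n$ distinct characters as $k$ ranges over a complete residue system mod $n$). For each such character I would check that the vector $(\chi_q^k(s_i))_{1\le i\le n}$ is an eigenvector of $M_q(d)$: applying $M_q(d)$ and using the substitution $s_j = s_i t$ with $t$ ranging over $\mathcal{S}_q^\times$ yields eigenvalue $\sum_{t\in\mathcal{S}_q^\times}\phi(1+dt)\chi_q^k(t)$. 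Finally I would reconcile this with the stated $\lambda_k(d)=\sum_{s\in\mathcal{S}_q}\phi(1+ds)\chi_q^k(s)$: the summand at $s=s_0=0$ contributes $\phi(1)\chi_q^k(0)=0$ by the convention $\chi_q^k(0)=0$, so summing over $\mathcal{S}_q$ and over $\mathcal{S}_q^\times$ agree. Since the $n$ eigenvectors are linearly independent (they are character vectors of distinct characters, hence orthogonal by character orthogonality on $\mathcal{S}_q^\times$), these are precisely all eigenvalues.

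For part~(ii), the task is the identity $\chi_q^k(d)\,\lambda_k(d)=\tfrac{(-1)^k}{2}\bigl(J_q(\phi,\chi_q^k)-J_q(\phi,\chi_q^{-k})\bigr)$ when $d$ is a nonsquare. My plan is to start from $\lambda_k(d)=\sum_{s\in\mathcal{S}_q^\times}\phi(1+ds)\chi_q^k(s)$ and convert the sum over squares into a sum over all of $\mathbb{F}_q^\times$ using the indicator $\tfrac{1+\phi(u)}{2}$ for $u$ being a nonzero square. Writing $s=u^2$ is awkward because of the two-to-one map, so instead I would use that summing $\chi_q^k(s)f(s)$ over $s\in\mathcal{S}_q^\times$ equals $\tfrac12\sum_{u\in\mathbb{F}_q^\times}\bigl(1+\phi(u)\bigr)\chi_q^k(u)f(u)$; the $\phi(u)\chi_q^k(u)=\chi_q^{n+k}(u)$ term and the $\chi_q^k(u)$ term will each assemble into a Jacobi sum after multiplying by $\chi_q^k(d)$ and changing variables $u\mapsto d^{-1}u$. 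The factor $\chi_q^k(d)$ is what lets me absorb $d$ into the variable and produce clean Jacobi sums $J_q(\phi,\chi_q^{\pm k})$; the sign $(-1)^k$ and the cancellation of one of the two resulting terms should emerge from the transformation formula in Lemma~\ref{Lem. transformation formula of Jacobi sums} together with $\phi(d)=-1$ (since $d$ is a nonsquare).

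The main obstacle I anticipate is the bookkeeping in part~(ii): correctly tracking how the two halves $\tfrac12(1+\phi(u))$ split into a $J_q(\phi,\chi_q^k)$-type term and a $J_q(\phi,\chi_q^{-k})$-type term, and verifying that the non-square condition $\phi(d)=-1$ converts an \emph{a priori} sum $J_q(\phi,\chi_q^k)+J_q(\phi,\chi_q^{-k})$ into the \emph{difference} with the right prefactor $(-1)^k/2$. In particular, one of the two Jacobi-sum contributions must carry a factor $\phi(d)\chi_q^k(d)=-\chi_q^k(d)$, and the relative sign together with Lemma~\ref{Lem. transformation formula of Jacobi sums} (which relates $J_q(\phi,\chi_q^k)$ and $J_q(\phi,\chi_q^{n-k})$ via $\phi(-1)$) is exactly what produces the minus sign and the factor $(-1)^k$. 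I would handle this by carefully writing the variable change $s\mapsto ds$ and comparing $\chi_q^k$ against $\chi_q^{-k}$ through the relation $\phi=\chi_q^n$, keeping the conventions $\chi_q^k(0)=0$ and $\phi(-1)=(-1)^{(q-1)/2}$ explicit throughout.
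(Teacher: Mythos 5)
Your proposal is correct. Part (i) is essentially identical to the paper's argument: both verify directly that the character vectors $\bigl(\chi_q^k(s_1),\dots,\chi_q^k(s_n)\bigr)^T$ are eigenvectors with eigenvalue $\lambda_k(d)$, using $\phi(s_i)=1$ and the substitution $s_j\mapsto s_j/s_i$, and conclude by linear independence; your extra remark about the group-circulant structure and character orthogonality is a harmless elaboration. For part (ii) you take a genuinely different route. The paper never expands $\lambda_k(d)$ via the indicator $\tfrac{1}{2}(1+\phi(u))$; instead it proves that $\chi_q^k(d)\lambda_k(d)$ is purely imaginary (by a conjugation computation), that $\lambda_k(1)$ is real (as an eigenvalue of the real symmetric matrix $M_q(1)$), and that $\chi_q^k(d)\lambda_k(d)+\lambda_k(1)=(-1)^kJ_q(\phi,\chi_q^k)$, whence the claim follows by extracting the imaginary part and using $\overline{J_q(\phi,\chi_q^k)}=J_q(\phi,\chi_q^{-k})$. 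Your version splits $\lambda_k(d)=\tfrac{1}{2}\sum_{u\in\mathbb{F}_q^{\times}}(1+\phi(u))\phi(1+du)\chi_q^k(u)$, multiplies by $\chi_q^k(d)$, absorbs $d$ by $u\mapsto d^{-1}u$ (picking up the crucial $\phi(d)=-1$ on the twisted term), and converts $J_q(\phi,\chi_q^{n+k})$ to $J_q(\phi,\chi_q^{-k})$ via the transformation formula; I checked the signs and this closes correctly, including the boundary case $k=n$. Your computation is more direct and self-contained (no appeal to reality of eigenvalues of $M_q(1)$ or to complex conjugation), at the cost of slightly heavier sign bookkeeping; the paper's argument is cleaner conceptually and makes transparent why the answer is the \emph{imaginary} part of $(-1)^kJ_q(\phi,\chi_q^k)$, which is the structural point exploited throughout the rest of the paper.
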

	
	\begin{proof}
		(i) For any $k\in[1,n]$, define the column vector 
		$$\v_k:=\left(\chi_q^{k}(s_1),\chi_q^{k}(s_2),\cdots,\chi_q^{k}(s_n)\right)^T.$$
		Then one can verify that 
		\begin{align*}
			\sum_{j\in[1,n]}\phi(s_i+ds_j)\chi_q^k(s_j)
			&=\sum_{j\in[1,n]}\phi(1+ds_j/s_i)\chi_q^k(s_j/s_i)\chi_q^k(s_i)\\
			&=\sum_{j\in[1,n]}\phi(1+ds_j)\chi_q^k(s_j)\cdot \chi_q^k(s_i)\\
			&=\lambda_k(d)\cdot \chi_q^k(s_i).
		\end{align*}
		This implies 
		$$M_q(d)\v_k=\lambda_k(d)\v_k$$
		for any $k\in[1,n]$. As vectors $\v_1,\v_2,\cdots,\v_n$ are linearly independent over $\mathbb{C}$, the numbers $\lambda_1(d),\lambda_2(d),\cdots,\lambda_n(d)$ are exactly all the eigenvalues of $M_q(d)$. Hence (i) holds.
		
		(ii) Suppose now $d\in\mathbb{F}_q^{\times}\setminus\mathcal{S}_q$, i.e., $\phi(d)=-1$. Then the conjugate of the complex number $\chi_q^k(d)\cdot\lambda_k(d)$ is equal to 
		\begin{align*}
			\overline{\chi_q^k(d)\cdot\lambda_k(d)}
			&=\sum_{s\in\mathcal{S}_q}\phi(1+ds)\chi_q^{-k}(ds)\\
			&=-\chi_q^k(d)\sum_{s\in\mathcal{S}_q}\phi(d+d^2s)\chi_q^{-k}(d^2s)\\
			&=-\chi_q^k(d)\sum_{s\in\mathcal{S}_q}\phi(d+s)\chi_q^{-k}(s)\\
			&=-\chi_q^k(d)\sum_{s\in\mathcal{S}_q\setminus\{0\}}\phi\left(d+\frac{1}{s}\right)\chi_q^k(s)\\
			&=-\chi_q^k(d)\sum_{s\in\mathcal{S}_q\setminus\{0\}}\phi(1+ds)\chi_q^k(s)\\
			&=-\chi_q^k(d)\cdot \lambda_k(d). 
		\end{align*}
		This implies that 
		$$\chi_q^k(d)\cdot\lambda_k\in\mathbb{R}{\bm i}=\left\{x{\bm i}:\ x\in\mathbb{R}\right\},$$
		i.e., $\chi_q^k(d)\cdot\lambda_k$ is either $0$ or a purely imaginary number. On the other hand, by (i), the numbers $\lambda_1(1),\lambda_2(1),\cdots,\lambda_n(1)$ are precisely all the eigenvalues of the real symmetric matrix $M_q(1)$. Thus, $\lambda_k(1)\in\mathbb{R}$ for any $k\in[1,n]$.

		As $d\in\mathbb{F}_q^{\times}\setminus\mathcal{S}_q$, we clearly have 
		$$\left\{ds:\ s\in\mathcal{S}_q\setminus\{0\}\right\}\cap \mathcal{S}_q=\emptyset,$$
		and 
		$$\left\{ds:\ s\in\mathcal{S}_q\setminus\{0\}\right\}\cup \mathcal{S}_q=\mathbb{F}_q.$$
		Using this, we obtain 
		\begin{align*}
			\chi_q^k(d)\cdot\lambda_k(d)+\lambda_k(1)
			&=\sum_{s\in\mathcal{S}_q}\phi(1+ds)\chi_q^k(ds)+\sum_{s\in\mathcal{S}_q}\phi(1+s)\chi_q^k(s)\\
			&=\sum_{x\in\mathbb{F}_q}\phi(1+x)\chi_q^k(x)\\
			&=(-1)^k\cdot J_q(\phi,\chi_q^k). 
		\end{align*}
		Noting that $\chi_q^k(d)\cdot\lambda_k(d)\in \mathbb{R}{\bm i}$ and $\lambda_k(1)\in\mathbb{R}$, we obtain 
		\begin{equation}\label{Eq. a in the proof of Lem. eigenvaules of matrix Mq(d)}
			\chi_q^k(d)\cdot \lambda_k(d)=\frac{(-1)^k}{2}\left(J_q(\phi,\chi_q^k)-J_q(\phi,\chi_q^{-k})\right)
		\end{equation}
		for any $k\in[1,n]$.
		
		In view of the above, we have completed the proof.
	\end{proof}
	
	For any $d\in\mathbb{F}_q^{\times}\setminus\mathcal{S}_q$, recall that the matrix 
	$$T_q(d)=\left[\phi(s_i+ds_j)\right]_{0\le i,j\le n}.$$
	We conclude this section with the following result.
	
	\begin{lemma}\label{Lem. eigenvaules of matrix Tq(d)}
		Let $q=2n+1$ be an odd prime power with $q\equiv 1\pmod 4$. Then for any $d\in\mathbb{F}_q^{\times}\setminus\mathcal{S}_q$, the $n+1$ numbers $\pm\sqrt{-n},\lambda_1(d),\lambda_2(d),\cdots,\lambda_{n-1}(d)$ are exactly all the eigenvalues of $T_q(d)$. 
	\end{lemma}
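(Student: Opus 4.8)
The plan is to realize $T_q(d)$ as a bordered version of the matrix $M_q(d)$ from Lemma~\ref{Lem. eigenvaules of matrix Mq(d)}, and then to split $\mathbb{C}^{n+1}$ into an $(n-1)$-dimensional invariant subspace carrying the eigenvalues $\lambda_1(d),\dots,\lambda_{n-1}(d)$ and a complementary invariant plane carrying $\pm\sqrt{-n}$. First I would record the border entries of $T_q(d)$. Since $s_0=0$ and $\phi(d)=-1$, for $j\in[1,n]$ we have $\phi(s_0+ds_j)=\phi(d)\phi(s_j)=-1$, while for $i\in[1,n]$ we have $\phi(s_i+ds_0)=\phi(s_i)=1$, and the corner entry is $\phi(0)=0$. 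Hence, writing $\bm 1=(1,\dots,1)^T\in\mathbb{C}^n$,
$$T_q(d)=\begin{pmatrix} 0 & -\bm 1^T \\ \bm 1 & M_q(d)\end{pmatrix}.$$
I would also note at once the key identity $\v_n=\bm 1$, since $\chi_q^n=\phi$ and $\phi(s_i)=1$ for every nonzero square $s_i$; in particular $M_q(d)\bm 1=\lambda_n(d)\bm 1$.

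Second, I would examine how $\bm 1$ pairs with the eigenvectors $\v_k=(\chi_q^k(s_1),\dots,\chi_q^k(s_n))^T$ of $M_q(d)$. Here $\bm 1^T\v_k=\sum_{s\in\mathcal{S}_q\setminus\{0\}}\chi_q^k(s)$ is the sum of the character $\chi_q^k$ over the order-$n$ subgroup of nonzero squares, so it equals $n$ when $\chi_q^k$ is trivial on squares (equivalently $n\mid k$, i.e.\ $k=n$ in the range $[1,n]$) and equals $0$ otherwise. Thus for $k\in[1,n-1]$ the lifted vector $\widetilde{\v}_k:=(0,\v_k^T)^T\in\mathbb{C}^{n+1}$ satisfies
$$T_q(d)\widetilde{\v}_k=\begin{pmatrix} -\bm 1^T\v_k \\ M_q(d)\v_k\end{pmatrix}=\begin{pmatrix} 0 \\ \lambda_k(d)\v_k\end{pmatrix}=\lambda_k(d)\widetilde{\v}_k,$$
so $\lambda_1(d),\dots,\lambda_{n-1}(d)$ are eigenvalues of $T_q(d)$ on the invariant subspace $U=\mathrm{span}(\widetilde{\v}_1,\dots,\widetilde{\v}_{n-1})$.

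Third, I would handle the remaining plane $V=\mathrm{span}(\e_0,\widetilde{\v}_n)$, where $\e_0=(1,0,\dots,0)^T$ and $\widetilde{\v}_n=(0,\bm 1^T)^T$. Direct computation gives $T_q(d)\e_0=(0,\bm 1^T)^T=\widetilde{\v}_n$ and $T_q(d)\widetilde{\v}_n=(-n,\lambda_n(d)\bm 1^T)^T=-n\e_0+\lambda_n(d)\widetilde{\v}_n$, so $V$ is invariant and the matrix of $T_q(d)|_V$ in the basis $(\e_0,\widetilde{\v}_n)$ is $\left(\begin{smallmatrix}0 & -n\\ 1 & \lambda_n(d)\end{smallmatrix}\right)$, with characteristic polynomial $X^2-\lambda_n(d)X+n$. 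The decisive input is therefore that $\lambda_n(d)=0$. Writing $\lambda_n(d)=\sum_{s\in\mathcal{S}_q\setminus\{0\}}\phi(1+ds)$ and substituting $t=ds$ (which runs over the nonzero non-squares as $s$ runs over the nonzero squares), this becomes $\sum_{t}\phi(1+t)$ over non-squares $t\neq0$; combining the two standard evaluations $\sum_{t\in\mathbb{F}_q^\times}\phi(1+t)=-1$ and $\sum_{t\in\mathbb{F}_q}\phi(t^2+t)=-1$ (the latter by completing the square) isolates this sum as $0$. With $\lambda_n(d)=0$ the characteristic polynomial is $X^2+n$, giving eigenvalues $\pm\sqrt{-n}$.

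Finally, since $\v_1,\dots,\v_n$ are linearly independent (Lemma~\ref{Lem. eigenvaules of matrix Mq(d)}), the $n+1$ vectors $\widetilde{\v}_1,\dots,\widetilde{\v}_{n-1},\e_0,\widetilde{\v}_n$ form a basis of $\mathbb{C}^{n+1}$ with $\mathbb{C}^{n+1}=U\oplus V$ a direct sum of invariant subspaces, so $T_q(d)$ is block-diagonal in this basis and its spectrum is exactly $\lambda_1(d),\dots,\lambda_{n-1}(d),\pm\sqrt{-n}$. I expect the main obstacle to be the character-sum evaluation $\lambda_n(d)=0$ together with verifying invariance of the plane $V$; the rest is bookkeeping with the block structure and the eigenvectors already supplied by the preceding lemma.
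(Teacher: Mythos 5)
Your proof is correct and follows essentially the same route as the paper: both decompose $\mathbb{C}^{n+1}$ using the lifted eigenvectors $(0,\v_k^T)^T$ for $k\in[1,n-1]$ and then treat the remaining two dimensions, the paper by exhibiting the eigenvectors $(\pm\sqrt{-n},1,\dots,1)^T$ directly, you by diagonalizing the invariant plane spanned by $\e_0$ and $(0,\bm 1^T)^T$, which amounts to the same thing. A minor merit of your write-up is that it isolates and proves the identity $\lambda_n(d)=0$, which the paper's proof relies on implicitly when asserting $T_q(d)\u_k=(-1)^{k+1}\sqrt{-n}\,\u_k$.
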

	
	\begin{proof}
		For $k=-1,0,1,\cdots,n-1$, the column vector $\u_k$ is defined by 
		$$\begin{cases}
			\u_k=\left((-1)^{k+1}\cdot \sqrt{-n},1,1,\cdots,1\right)^T             & \mbox{if}\ k=-1,0,\\
			\u_k=\left(0,\chi_q^k(s_1),\chi_q^k(s_2),\cdots,\chi_q^k(s_n)\right)^T & \mbox{if}\ k\in[1,n-1].
		\end{cases}$$
	It is easy to see that the vectors $\u_{-1},\u_0,\u_1,\cdots,\u_{n-1}$ are linearly independent over $\mathbb{C}$. 
	
	When $d\in\mathbb{F}_q^{\times}\setminus\mathcal{S}_q$,  it is known that (cf. \cite[Theorem 5.48]{Lidl}) 
	$$-1+2\sum_{1\le i\le n}\phi(s_i+d)=\sum_{x\in\mathbb{F}_q}\phi(x^2+d)=-1.$$
	Thus, for any $d\in\mathbb{F}_q^{\times}\setminus\mathcal{S}_q$, we have 
	$$\sum_{1\le i\le n}\phi(s_i+d)=0.$$
	Also, for any $i\in [1,n]$, note that
	$$\sum_{1\le j\le n}\phi(s_i+ds_j)=\sum_{1\le j\le n}\phi(s_i/s_j+d)=\sum_{1\le i\le n}\phi(s_i+d).$$
	Assembling the above results gives 
	$$\sum_{1\le j\le n}\phi(s_i+ds_j)=
	\begin{cases}
		-n  &  \mbox{if}\ i=0,\\
		0   &   \mbox{if}\ i\in[1,n].
	\end{cases}$$
	By this, for $k=-1,0$ we clearly have 
	$$T_q(d)\u_k=(-1)^{k+1}\sqrt{-n}\u_k.$$
    In addition, with essentially the same method appeared in the proof of Lemma \ref{Lem. eigenvaules of matrix Mq(d)}, we obtain 
	$$T_q(d)\u_k=\lambda_k(d)\u_k$$
	for any $k\in [1,n-1]$. In view of the above, $\pm\sqrt{-n},\lambda_1(d),\lambda_2(d),\cdots,\lambda_{n-1}(d)$ are precisely all the eigenvalues of $T_q(d)$. This completes the proof.
	\end{proof}

	\section{Proofs of Theorems \ref{Thm. Iq when q=3 mod 4}--\ref{Thm. Iq when q=1 mod 4}}
	\setcounter{lemma}{0}
	\setcounter{theorem}{0}
	\setcounter{equation}{0}
	\setcounter{conjecture}{0}
	\setcounter{remark}{0}
	\setcounter{corollary}{0}
	
	{\bf \noindent Proofs of Theorems \ref{Thm. Iq when q=3 mod 4}.} Let $q\equiv 3\pmod 4$ be an odd prime power with $n=(q-1)/2$. We first prove that 
	$$x_q=\frac{I_q(\chi_q)}{\sqrt{(-1)^{\frac{n-1}{2}}\cdot n}}\in\mathbb{Q}.$$
	Let $\sigma_a\in\Gal(\mathbb{Q}(\zeta_{q-1})/\mathbb{Q})$ be an arbitrary automorphism with $a\in\mathbb{Z}$, $\gcd(a,q-1)=1$ and $\sigma_a(\zeta_{q-1})=\zeta_{q-1}^a$. For $j=1,2,3,4$, define the set 
	$$S_j(a)=\left\{k\in (0,n/2):\ \frac{j-1}{2}n<\{ak\}_{2n}<\frac{j}{2}n\right\}.$$
	As $q\equiv 3\pmod 4$, applying Lemma \ref{Lem. transformation formula of Jacobi sums}, we obtain  
	$$J_q(\phi,\chi_q^k)=-1\cdot J_q(\phi,\chi_q^{\pm n-k})$$
	for any $k\in\mathbb{Z}$. Using this, we obtain 
	\begin{align*}
		   &\prod_{k\in S_2(a)}\left(J_q(\phi,\chi_q^{\{ak\}_{2n}})-J_q(\phi,\chi_q^{-\{ak\}_{2n}})\right)\\
		=&\prod_{k\in S_2(a)}\left(-J_q(\phi,\chi_q^{n-\{ak\}_{2n}})+J_q(\phi,\chi_q^{-n+\{ak\}_{2n}})\right)\\
		=&(-1)^{\# S_2(a)}\cdot \prod_{k\in S_2(a)}\left(J_q(\phi,\chi_q^{n-\{ak\}_{2n}})-J_q(\phi,\chi_q^{-n+\{ak\}_{2n}})\right).
	\end{align*}
	Analogously, one can verify that 
	\begin{equation*}
		\prod_{k\in S_3(a)}\left(J_q(\phi,\chi_q^{\{ak\}_{2n}})-J_q(\phi,\chi_q^{-\{ak\}_{2n}})\right)=\prod_{k\in S_3(a)}\left(J_q(\phi,\chi_q^{-n+\{ak\}_{2n}})-J_q(\phi,\chi_q^{n-\{ak\}_{2n}})\right),
	\end{equation*}
	and 
	\begin{align*}
		   &\prod_{k\in S_4(a)}\left(J_q(\phi,\chi_q^{\{ak\}_{2n}})-J_q(\phi,\chi_q^{-\{ak\}_{2n}})\right)\\
		=&\prod_{k\in S_4(a)}\left(-J_q(\phi,\chi_q^{2n-\{ak\}_{2n}})+J_q(\phi,\chi_q^{-2n+\{ak\}_{2n}})\right)\\
		=&(-1)^{\# S_4(a)}\cdot\prod_{k\in S_4(a)}\left(J_q(\phi,\chi_q^{2n-\{ak\}_{2n}})-J_q(\phi,\chi_q^{-2n+\{ak\}_{2n}})\right).
	\end{align*}
	Observing that $k_1a\not\equiv \pm k_2a\pmod n$ for any $0<k_1<k_2<n/2$, we have 
	$$\bigcup_{i\in [1,4]}X_i=(0,n/2)$$
	and $X_i\cap X_j=\emptyset$ for any $1\le i<j\le 4$, where 
	\begin{align*}
		X_1&=\left\{\{ak\}_{2n}:\ k\in S_1(a)\right\},\\
		X_2&=\left\{n-\{ak\}_{2n}:\ k\in S_2(a)\right\},\\
		X_3&=\left\{-n+\{ak\}_{2n}:\ k\in S_3(a)\right\},\\
		X_4&=\left\{2n-\{ak\}_{2n}:\ k\in S_4(a)\right\}.
	\end{align*}
	Combining the above results, we immediately obtain 
	\begin{equation}\label{Eq. a in the proof of Thm. Iq when q=3 mod 4}
		\sigma_a\left(I_q(\chi_q)\right)=\prod_{k\in (0,n/2)}\left(J_q(\phi,\chi_q^{\{ak\}_{2n}})-J_q(\phi,\chi_q^{-\{ak\}_{2n}})\right)=(-1)^{\# S_2(a)\cup S_4(a)}\cdot I_q(\chi_q).
	\end{equation}
	Note again that   
	$$k_1a\not\equiv \pm k_2a\pmod n$$
	for any $k_1,k_2\in(0,n/2)$ with $k_1\neq k_2$. Thus,  
	$$S_2(a)\cup S_4(a)=\left\{k\in(0,n/2):\ \frac{n}{2}<\{ka\}_n<n\right\}.$$
	Combining this with (\ref{Eq. the Jenkin identity}) and (\ref{Eq. a in the proof of Thm. Iq when q=3 mod 4}), we immediately obtain 
	\begin{equation}\label{Eq. b in the proof of Thm. Iq when q=3 mod 4}
		\sigma_a\left(I_q(\chi_q)\right)=\left(\frac{a}{n}\right)\cdot I_q(\chi_q).
	\end{equation}
	
	On the other hand, by Lemma \ref{Lem. serge lang} 
	\begin{equation}\label{Eq. c in the proof of Thm. Iq when q=3 mod 4}
		\sigma_a\left(\sqrt{(-1)^{\frac{n-1}{2}}n}\right)=\left(\frac{a}{n}\right)\cdot \sqrt{(-1)^{\frac{n-1}{2}}n}. 
	\end{equation}
	By (\ref{Eq. b in the proof of Thm. Iq when q=3 mod 4}) and (\ref{Eq. c in the proof of Thm. Iq when q=3 mod 4}), we see that 
	$$\sigma_a(x_q)=x_q$$
	for any $\sigma_a\in\Gal(\mathbb{Q}(\zeta_{q-1})/\mathbb{Q})$. Hence $x_q\in\mathbb{Q}$ by the Galois theory. 
	
	We next evaluate $x_q^2$. Fix a generator $g$ of cyclic group $\mathbb{F}_q^{\times}$ and let $s_{j+1}=g^{2j}$ for $j=0,1,\cdots,n-1$. Then the matrix 
	$$M_q(-1)=\left[\phi(s_i-s_j)\right]_{1\le i,j\le n}=\left[\phi(1-g^{2j-2i})\right]_{0\le i,j\le n-1}$$
	is indeed a circulant matrix $C(\v)$ of the vector 
	$$\v=\left(\phi(1-s_1),\phi(1-s_2),\cdots,\phi(1-s_n)\right).$$ 
	Hence 
	$$N_q(-1)=\left[\phi(s_i-s_j)\right]_{2\le i,j\le n}=\left[\phi(1-g^{2j-2i})\right]_{1\le i,j\le n-1}$$
	is the almost circulant matrix of $\v$. As $q\equiv 3\pmod4$, we have $-1\not\in\mathcal{S}_q$. Note that $\lambda_n(-1)=0$. Applying Lemma \ref{Lem. eigenvaules of matrix Mq(d)} and Lemma \ref{Lem. Wu and Wang}, we obtain 
	\begin{equation*}
		\det N_q(-1)=\frac{1}{n}\prod_{k\in[1,n-1]}\lambda_k(-1)=\frac{1}{2^{n-1}\cdot n}\prod_{k\in [1,n-1]}\left(J_q(\phi,\chi_q^k)-J_q(\phi,\chi_q^{-k})\right).
	\end{equation*}
	Using Lemma \ref{Lem. transformation formula of Jacobi sums} again, the above equality implies 
	\begin{equation}\label{Eq. d in the proof of Thm. Iq when q=3 mod 4}
		2^{n-1}\cdot \det N_q(-1)=x_q^2\in\mathbb{Z}.
	\end{equation}
	As $x_q\in\mathbb{Q}$, by (\ref{Eq. d in the proof of Thm. Iq when q=3 mod 4}) we see that $x_q\in\mathbb{Z}$.
	
	In view of the above, we have completed the proof.\qed 
	
	Next, we  turn to the proof of our second theorem.
	
		{\bf \noindent Proof of Theorem \ref{Thm. Iq when q=1 mod 4}.} Let $q=2n+1\ge 7$ be an odd prime power with $q\equiv 1\pmod 4$. We first show that 
	$$y_q={\bm i}^{\frac{n-2}{2}}\cdot  I_q(\chi_q)\cdot \sqrt{q-1}\in\mathbb{Z}.$$
	Let $\sigma_a\in\Gal(\mathbb{Q}(\zeta_{q-1})/\mathbb{Q})$ be an arbitrary automorphism with $a\in[1,q-1]$, $\gcd(a,q-1)=1$ and $\sigma_a(\zeta_{q-1})=\zeta_{q-1}^a$. For any $j\in [1,4]$, let
	$$S_j(a)=\left\{k\in (0,n/2):\ \frac{j-1}{2}n<\{ak\}_{2n}<\frac{j}{2}n\right\}.$$
	By Lemma \ref{Lem. transformation formula of Jacobi sums} and $\phi(-1)=1$, we have 
	$$J_q(\phi,\chi_q^k)=J_q(\phi,\chi_q^{\pm n-k})$$
	for any $k\in\mathbb{Z}$. Applying this, one can verify that 
	\begin{equation*}
		\prod_{k\in S_2(a)}\left(J_q(\phi,\chi_q^{\{ak\}_{2n}})-J_q(\phi,\chi_q^{-\{ak\}_{2n}})\right)=\prod_{k\in S_2(a)}\left(J_q(\phi,\chi_q^{n-\{ak\}_{2n}})-J_q(\phi,\chi_q^{-n+\{ak\}_{2n}})\right).
	\end{equation*}
	Similarly, we also have 
	\begin{align*}
		  &\prod_{k\in S_3(a)}\left(J_q(\phi,\chi_q^{\{ak\}_{2n}})-J_q(\phi,\chi_q^{-\{ak\}_{2n}})\right)\\
		=&\prod_{k\in S_3(a)}\left(-J_q(\phi,\chi_q^{-n+\{ak\}_{2n}})+J_q(\phi,\chi_q^{n-\{ak\}_{2n}})\right)\\
		=&(-1)^{\# S_3(a)}\cdot \prod_{k\in S_3(a)}\left(J_q(\phi,\chi_q^{-n+\{ak\}_{2n}})-J_q(\phi,\chi_q^{n-\{ak\}_{2n}})\right),
	\end{align*}
	and 
	\begin{align*}
		  &\prod_{k\in S_4(a)}\left(J_q(\phi,\chi_q^{\{ak\}_{2n}})-J_q(\phi,\chi_q^{-\{ak\}_{2n}})\right)\\
		=&\prod_{k\in S_4(a)}\left(-J_q(\phi,\chi_q^{2n-\{ak\}_{2n}})+J_q(\phi,\chi_q^{-2n+\{ak\}_{2n}})\right)\\
		=&(-1)^{\# S_4(a)}\cdot\prod_{k\in S_4(a)}\left(J_q(\phi,\chi_q^{2n-\{ak\}_{2n}})-J_q(\phi,\chi_q^{-2n+\{ak\}_{2n}})\right).
	\end{align*}
	Note that  
	$$\sigma_a(\bm i)=(-1)^{(a-1)/2}\cdot {\bm i}.$$ 
	With essentially the same method used in the proof of Theorem \ref*{Thm. Iq when q=3 mod 4}, one can verify that 
	\begin{equation*}
		\sigma_a\left({\bm i}^{\frac{n-2}{2}}\cdot I_q(\chi_q)\right)=(-1)^{\frac{(a-1)(n-2)}{4}+\# S_3(a)\cup S_4(a)}\cdot {\bm i}^{\frac{n-2}{2}}\cdot I_q(\chi_q).
	\end{equation*}
	Applying Lemma \ref{Lem. Key lemma} to the above equality, we obtain  
	\begin{equation}\label{Eq. a in the proof of Thm. Iq when q=1 mod 4}
		\sigma_a\left({\bm i}^{\frac{n-2}{2}}\cdot I_q(\chi_q)\right)=\left(\frac{q-1}{a}\right)\cdot {\bm i}^{\frac{n-2}{2}}\cdot I_q(\chi_q). 
	\end{equation}
	On the other hand, by Lemma \ref{Lem. Cohen} we see that 
	\begin{equation}\label{Eq. b in the proof of Thm. Iq when q=1 mod 4}
		\sigma_a\left(\sqrt{q-1}\right)=\left(\frac{q-1}{a}\right)\cdot\sqrt{q-1}.
	\end{equation}
	Assembling (\ref{Eq. a in the proof of Thm. Iq when q=1 mod 4}) and (\ref{Eq. b in the proof of Thm. Iq when q=1 mod 4}), for any $\sigma_a\in \Gal(\mathbb{Q}(\zeta_{q-1})/\mathbb{Q})$ we have 
	$$\sigma_a(y_q)=y_q,$$
	and hence $y_q\in\mathbb{Q}$ by the Galois theory. As $y_q$ is an algebraic integer, we further obtain $y_q\in\mathbb{Z}$. 
	
	We next consider $y_q^2$. Recall that the matrix
	$$T_d(q)=\left[\phi(s_i+ds_j)\right]_{0\le i,j\le n}.$$
	By Lemma \ref{Lem. transformation formula of Jacobi sums}, Lemma \ref{Lem. eigenvaules of matrix Mq(d)} and Lemma \ref{Lem. eigenvaules of matrix Tq(d)}, one can verify that 
	\begin{align}\label{Eq. c in the proof of Thm. Iq when q=1 mod 4}
		\det T_q(d)
		&=n\cdot\lambda_{n/2}(d)\cdot\prod_{k\in[1,n-1]\setminus\{n/2\}}\lambda_k(d)\notag\\
		&=n\cdot\lambda_{n/2}(d)\cdot\prod_{k\in[1,n-1]\setminus\{n/2\}}\frac{(-1)^k\chi_q^{-k}(d)}{2}\left(J_q(\phi,\chi_q^k)-J_q(\phi,\chi_q^{-k})\right)\notag\\
		&=\frac{n\cdot\lambda_{n/2}(d)}{2^{n-2}}\cdot (-1)^{\frac{n-2}{2}}\cdot I_q(\chi_q)^2\notag\\
		&=\frac{\lambda_{n/2}(d)}{2^{n-1}}\cdot y_q^2.
	\end{align}
	Note also that 
	\begin{align*}
		2\cdot\lambda_{n/2}(d)
		&=2\sum_{j\in[1,n]}\phi(1+ds_j)\chi_q^{n/2}(s_j)\\
		&=2\sum_{j\in[1,n]}\phi(1+ds_j)\phi\left(\sqrt{s_j}\right)\\
		&=\sum_{j\in[1,n]}\phi(1+ds_j)\phi\left(\sqrt{s_j}\right)+\sum_{j\in[1,n]}\phi(1+ds_j)\phi\left(-\sqrt{s_j}\right)\\
		&=\sum_{x\in\mathbb{F}_q}\phi(1+dx^2)\phi(x),
	\end{align*}
	where $\sqrt{s_j}\in\mathbb{F}_q$ such that $(\sqrt{s_j})^2=s_j$. By this and (\ref{Eq. definition of ad(q)}) we obtain 
	\begin{equation}\label{Eq. d in the proof of Thm. Iq when q=1 mod 4}
		a_d(q)=-2\lambda_{n/2}(d).
	\end{equation}
	Now combining (\ref{Eq. c in the proof of Thm. Iq when q=1 mod 4}) with (\ref{Eq. d in the proof of Thm. Iq when q=1 mod 4}), we obtain 
	$$-1\cdot 2^n\cdot \det T_q(d)=a_d(q)\cdot y_q^2.$$
	
	In view of the above, we have completed the proof.\qed

	\section{Proof of Corollary \ref{Cor. of Thm. Iq when q=1 mod 4}}
	\setcounter{lemma}{0}
	\setcounter{theorem}{0}
	\setcounter{equation}{0}
	\setcounter{conjecture}{0}
	\setcounter{remark}{0}
	\setcounter{corollary}{0}
	
    Now we turn to the proof of our last result. We first introduce the following lemma.
    
    \begin{lemma}\label{Lem. zp is a quadratic residue mod p}
    	Let $p\equiv 1\pmod 4$ be a prime and let 
    	$$B_p=\prod_{k=1}^{(p-1)/4}\binom{2k}{k}.$$
    	Then 
    	$$\left(\frac{B_p}{p}\right)=\left(\frac{2}{p}\right)=\left(\frac{\frac{p-1}{2}!}{p}\right).$$
    \end{lemma}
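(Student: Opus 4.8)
The plan is to convert $\left(\frac{B_p}{p}\right)$ into a product of Legendre symbols of factorials, peel off a Wilson-type main term equal to $\left(\frac{2}{p}\right)$, and reduce what remains to a single quadratic-residue assertion about a binomial coefficient. Throughout set $m:=(p-1)/4$, so that $(p-1)/2=2m$ and $4m\equiv-1\pmod p$.

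First I would exploit that every $(k!)^2$ is a perfect square: since $\binom{2k}{k}=(2k)!/(k!)^2$ and $2k\le(p-1)/2<p$, multiplicativity gives $\left(\frac{\binom{2k}{k}}{p}\right)=\left(\frac{(2k)!}{p}\right)$, hence $\left(\frac{B_p}{p}\right)=\prod_{k=1}^{m}\left(\frac{(2k)!}{p}\right)$. Grouping $(2k)!=\prod_{t=1}^{k}(2t-1)(2t)$ and counting multiplicities, this equals the weighted product $\prod_{t=1}^{m}g(t)^{\,m+1-t}$, where $g(t):=\left(\frac{2t(2t-1)}{p}\right)$.

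Next I would compute the unweighted product $V:=\prod_{t=1}^{m}g(t)=\left(\frac{(2m)!}{p}\right)=\left(\frac{((p-1)/2)!}{p}\right)$. Wilson's theorem yields $\big(((p-1)/2)!\big)^2\equiv(-1)^{(p+1)/2}\equiv-1\pmod p$, so $((p-1)/2)!$ is a primitive fourth root of unity and $V=(-1)^{(p-1)/4}=(-1)^m$; comparing the cases $p\equiv1,5\pmod8$ identifies $(-1)^m$ with $\left(\frac2p\right)$. Because $V^{m+1}=1$, I can rewrite $g(t)^{m+1-t}=g(t)^{m+1}\cdot g(t)^{-t}$ and factor out $V^{m+1}$, obtaining the clean form $\left(\frac{B_p}{p}\right)=\prod_{t=1}^{m}g(t)^{t}=\prod_{1\le t\le m,\ t\ \mathrm{odd}}g(t)$. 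Thus the lemma is equivalent to $\prod_{t\ \mathrm{odd}}g(t)=(-1)^m$, i.e.\ (in view of $V=(-1)^m$) to the vanishing statement that the even-indexed product $E:=\prod_{1\le t\le m,\ t\ \mathrm{even}}g(t)$ equals $1$.

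The hard part will be this residual identity $E=1$. Here the relation $4m\equiv-1$ is useful: it gives $4v-1\equiv 4(v+m)$, hence $g(2v)=\left(\frac{4v(4v-1)}{p}\right)=\left(\frac{v(v+m)}{p}\right)$, and therefore $E=\left(\frac{\prod_{v=1}^{\lfloor m/2\rfloor}v(v+m)}{p}\right)=\left(\frac{\binom{m+\lfloor m/2\rfloor}{\lfloor m/2\rfloor}}{p}\right)$, using $\prod_{v=1}^{V_0}v(v+m)=(V_0!)^2\binom{m+V_0}{V_0}$ with $V_0=\lfloor m/2\rfloor$. So everything comes down to showing that this central-type binomial coefficient is a quadratic residue modulo $p$, which I expect to be the genuine core of the proof. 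I would try to establish it either by a Wilson/Gauss-type evaluation of $\prod_v\left(\frac{v(v+m)}{p}\right)$ parallel to the computation of $V$, exploiting the reflection $t\mapsto 2m+1-t$ (which preserves $g$ and interchanges the two parity classes) together with quadratic reciprocity, or by tying the residue class of $\binom{m+\lfloor m/2\rfloor}{\lfloor m/2\rfloor}$ to the representation $p=c_p^2+4b_p^2$ underlying the whole problem.
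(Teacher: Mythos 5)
Your reductions up to the final step are all correct: $\bigl(\frac{B_p}{p}\bigr)=\prod_{k}\bigl(\frac{(2k)!}{p}\bigr)=\prod_{t=1}^{m}g(t)^{m+1-t}$ with $g(t)=\bigl(\frac{2t(2t-1)}{p}\bigr)$, the evaluation $V=\prod_{t=1}^{m}g(t)=\bigl(\frac{((p-1)/2)!}{p}\bigr)=(-1)^{m}=\bigl(\frac{2}{p}\bigr)$, and hence the equivalence of the lemma with the single identity $E:=\prod_{1\le t\le m,\ t\ \mathrm{even}}g(t)=1$. But that identity is precisely where the entire content of the lemma lives, and your proposal does not prove it: it only restates it (as the assertion that $\binom{m+\lfloor m/2\rfloor}{\lfloor m/2\rfloor}$ is a quadratic residue mod $p$) and names two candidate strategies. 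That is a genuine gap --- everything before it is formal bookkeeping. Worse, the first strategy you name is a dead end as described: the reflection $t\mapsto 2m+1-t$ does satisfy $g(2m+1-t)=g(t)$ and does flip parity, but it maps $[1,m]$ onto $[m+1,2m]$ rather than into itself, so it only identifies the even-indexed product over $[1,m]$ with the odd-indexed product over $[m+1,2m]$ and yields a tautology; it cannot separate $E$ from the odd-indexed product over $[1,m]$.

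For comparison, the paper reaches exactly the same fork in additive language: writing $N_i(p)$ for the number of non-residues $x\in(0,p/2)$ with $x\equiv i\pmod 4$, it gets $\bigl(\frac{B_p}{p}\bigr)=(-1)^{(p-1)/4}(-1)^{N_0(p)+N_3(p)}$, and your $E=1$ is literally the statement $N_0(p)+N_3(p)\equiv 0\pmod 2$ (since the even $t\le m$ contribute the factors $x\equiv 0,3\pmod 4$ in $(0,p/2)$). The paper then supplies the missing ingredient by a counting argument: using the bijections $x\mapsto 4x$, $x\mapsto p-x$ and $x\mapsto x/2$ on non-residues it shows $N_0+N_1=\#\{x\in(0,p/4):(\frac{x}{p})=-1\}$ and $N_0+N_2=\#\{x\in(0,p/4):(\frac{x}{p})=-(\frac{2}{p})\}$, whence $N_1+N_2\equiv\frac{p-1}{4}\pmod 2$; combined with $N_0+N_1+N_2+N_3=\frac{p-1}{4}$ this gives $N_0+N_3\equiv 0\pmod 2$. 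Some argument of this kind --- or an actual evaluation of your binomial coefficient modulo squares --- is what you still need to supply before the proposal constitutes a proof.
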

	
	\begin{proof}
		By the definition of binomial coefficients, one can verify that 
		\begin{equation}\label{Eq. a in the proof of Lem. zp is a quadratic residue mod p}
			\left(\frac{B_p}{p}\right)=\prod_{k=1}^{(p-1)/4}\left(\frac{(2k)!}{p}\right)=\prod_{x=1}^{(p-1)/2}\left(\frac{x}{p}\right)^{\frac{p-1}{4}}\left(\frac{x}{p}\right)^{\lfloor\frac{x-1}{2}\rfloor}=(-1)^{\frac{p-1}{4}}\prod_{x=1}^{(p-1)/2}\left(\frac{x}{p}\right)^{\lfloor\frac{x-1}{2}\rfloor}.
		\end{equation}
	For $i=0,1,2,3$, let 
	$$N_i(p)=\#\left\{x\in (0,p/2):\ \left(\frac{x}{p}\right)=-1\ \text{and}\ x\equiv i\pmod 4\right\}.$$
	As $p\equiv 1\pmod 4$, one can verify that 
	\begin{align*}
		 &N_0(p)+N_1(p)\\
		=&\#\left\{x\in(0,p/8):\ \left(\frac{x}{p}\right)=-1\right\}+\#\left\{x\in(p/2,p):\ \left(\frac{x}{p}\right)=-1\ \text{and}\ x\equiv 0\pmod 4\right\}\\
		=&\#\left\{x\in(0,p/8):\ \left(\frac{x}{p}\right)=-1\right\}+\#\left\{x\in(p/8,p/4):\ \left(\frac{x}{p}\right)=-1\right\}\\
		=&\#\left\{x\in(0,p/4):\ \left(\frac{x}{p}\right)=-1\right\},
	\end{align*}
	and that 
	\begin{equation*}
		N_0(p)+N_2(p)=\#\left\{x\in (0,p/4): \left(\frac{x}{p}\right)=-\left(\frac{2}{p}\right)\right\}.
	\end{equation*}
	Note that $(\frac{2}{p})=(-1)^{(p-1)/4}$ if $p\equiv 1\pmod 4$. By the above results, we have  
	\begin{equation}\label{Eq. b in the proof of Lem. zp is a quadratic residue mod p}
		N_1(p)+N_2(p)\equiv \frac{p-1}{4}\pmod 2.
	\end{equation}
Using $p\equiv 1\pmod 4$ again, we obtain
	\begin{equation}\label{Eq. c in the proof of Lem. zp is a quadratic residue mod p}
		N_0(p)+N_1(p)+N_2(p)+N_3(p)=\frac{p-1}{4}.
	\end{equation}
	By (\ref{Eq. b in the proof of Lem. zp is a quadratic residue mod p}) and (\ref{Eq. c in the proof of Lem. zp is a quadratic residue mod p}), we see that 
	$$N_0(p)+N_3(p)\equiv 0\pmod 2.$$
	By this and (\ref{Eq. a in the proof of Lem. zp is a quadratic residue mod p}), we obtain 
	$$\left(\frac{B_p}{p}\right)=(-1)^{\frac{p-1}{4}+N_0(p)+N_3(p)}=(-1)^{\frac{p-1}{4}}=\left(\frac{2}{p}\right).$$
	
	Finally, note that 
	$$\left(\frac{p-1}{2}!\right)^2\equiv (p-1)!\equiv -1\pmod {p},$$
	that is, $\frac{p-1}{2}!=\pm\sqrt{-1}$ over $\mathbb{F}_p$. By this and $p\equiv 1\pmod 4$, we obtain 
	$$\left(\frac{\frac{p-1}{2}!}{p}\right)=\left(\frac{\pm \sqrt{-1}}{p}\right)=(-1)^{\frac{p-1}{4}}=\left(\frac{2}{p}\right).$$
	
	In view of the above, we have completed the proof.
	\end{proof}
	
	To state our next lemma, we need introduce the following notations. Let $\zeta_{p-1}$ be a primitive $(p-1)$-th root of unity. Let the cyclotomic field $K=\mathbb{Q}(\zeta_{p-1})$ and let $\mathcal{O}_K$ be the ring of all algebraic integers over $K$. Let $\mathfrak{p}$ be a prime ideal of $\mathcal{O}_K$ with $p\in\mathfrak{p}$. Then the Teich\"{u}muller character $\omega_{\mathfrak{p}}$ of $\mathfrak{p}$ is defined by 
	$$\omega_{\mathfrak{p}}\left(x\mod \mathfrak{p}\right)\equiv x\pmod{\mathfrak{p}}$$
	for any $x\in\mathcal{O}_K$. As $p$ splits totally over $\mathcal{O}_K$, we have 
	$$\mathcal{O}_K/\mathfrak{p}\cong\mathbb{F}_p.$$
	Thus, if we identify $\mathcal{O}_K/\mathfrak{p}$ with $\mathbb{F}_p$, then it is easy to verify that $\omega_{\mathfrak{p}}$ is a generator of $\widehat{\mathbb{F}_p^{\times}}$. 
	
	We now state our next lemma (cf. \cite[Proposition 3.6.4]{Cohen}).
	
	\begin{lemma}\label{Lem. congruence of Jacobi sums}
		Let notations be as above. Then for any integers $1\le i,j\le p-2$, 
		$$J_p(\omega_{\mathfrak{p}}^{-i},\omega_{\mathfrak{p}}^{-j})\equiv -\binom{i+j}{i}\pmod {\mathfrak{p}}.$$
		In particular, if $i+j\ge p$, then 
		$$J_p(\omega_{\mathfrak{p}}^{-i},\omega_{\mathfrak{p}}^{-j})\equiv 0\pmod {\mathfrak{p}}.$$
	\end{lemma}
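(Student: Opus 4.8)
The plan is to reduce the Jacobi sum modulo $\mathfrak{p}$ to a character-free power sum over $\mathbb{F}_p$, and then evaluate that sum by the binomial theorem together with the standard power-sum formula.

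First I would exploit the defining property of the Teich\"{u}muller character. For $x\in\mathbb{F}_p^{\times}$, viewed inside $\mathcal{O}_K/\mathfrak{p}$, we have $\omega_{\mathfrak{p}}(x)\equiv x\pmod{\mathfrak{p}}$, hence $\omega_{\mathfrak{p}}^{-i}(x)\equiv x^{-i}\pmod{\mathfrak{p}}$, and similarly for the second factor. Since $p\in\mathfrak{p}$ and $\mathcal{O}_K/\mathfrak{p}\cong\mathbb{F}_p$, congruences modulo $\mathfrak{p}$ between rational integers agree with congruences modulo $p$, so the reduced values may be computed entirely in $\mathbb{F}_p$. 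The terms $x=0,1$ drop out because $\omega_{\mathfrak{p}}^{-i}(0)=\omega_{\mathfrak{p}}^{-j}(0)=0$. Writing $x^{-i}=x^{p-1-i}$ and $(1-x)^{-j}=(1-x)^{p-1-j}$ (legitimate since $x,1-x\in\mathbb{F}_p^{\times}$ and $1\le i,j\le p-2$), and noting that the resulting integrand again vanishes at $x=0,1$, I obtain
$$J_p(\omega_{\mathfrak{p}}^{-i},\omega_{\mathfrak{p}}^{-j})\equiv S:=\sum_{x\in\mathbb{F}_p}x^{p-1-i}(1-x)^{p-1-j}\pmod{\mathfrak{p}}.$$

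Next I would expand $(1-x)^{p-1-j}=\sum_{t=0}^{p-1-j}\binom{p-1-j}{t}(-1)^tx^t$ and interchange the order of summation, so that $S=\sum_{t}\binom{p-1-j}{t}(-1)^t\sum_{x\in\mathbb{F}_p}x^{p-1-i+t}$. The inner sum equals $-1$ when $(p-1)\mid(p-1-i+t)$ and vanishes otherwise. A short range check shows the exponent $p-1-i+t$ lies in $[1,2p-4]$, strictly below $2(p-1)$, so the only way to reach a positive multiple of $p-1$ is $p-1-i+t=p-1$, i.e. $t=i$; this $t$ is admissible (that is, $0\le i\le p-1-j$) precisely when $i+j\le p-1$. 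When $i+j\le p-1$ only the term $t=i$ survives, giving $S\equiv -(-1)^i\binom{p-1-j}{i}\pmod p$. I would then reduce this binomial coefficient modulo $p$ via $p-j-k\equiv-(j+k)$: the numerator $\prod_{k=1}^{i}(p-j-k)\equiv(-1)^i(i+j)!/j!$, whence $\binom{p-1-j}{i}\equiv(-1)^i\binom{i+j}{i}\pmod p$ and therefore $S\equiv-\binom{i+j}{i}$. When $i+j\ge p$ the admissibility condition fails, no term survives, and $S\equiv 0$, which yields the ``in particular'' assertion (consistent with the general formula, since $\binom{i+j}{i}\equiv0\pmod p$ for $p\le i+j\le 2p-4$).

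The computation is essentially routine, so there is no serious obstacle; the two points requiring care are the passage from character values to honest power functions modulo $\mathfrak{p}$, which rests squarely on the definition of $\omega_{\mathfrak{p}}$ and the identification $\mathcal{O}_K/\mathfrak{p}\cong\mathbb{F}_p$, and the range analysis isolating the unique surviving exponent (including the boundary case $i+j=p-1$). The one place demanding attention to signs is the final reduction of $\binom{p-1-j}{i}$ modulo $p$.
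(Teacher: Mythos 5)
Your proof is correct. Note that the paper does not actually prove this lemma at all: it is quoted as a known result with a reference to Cohen, \emph{Number Theory I}, Proposition 3.6.4, so there is no in-paper argument to compare against. Your derivation --- reducing $\omega_{\mathfrak{p}}^{-i}(x)$ to $x^{p-1-i}$ modulo $\mathfrak{p}$ via the defining congruence of the Teichm\"uller character, expanding $(1-x)^{p-1-j}$ by the binomial theorem, isolating the unique exponent divisible by $p-1$ via the range check $p-1-i+t\in[1,2p-4]$, and finally reducing $\binom{p-1-j}{i}\equiv(-1)^i\binom{i+j}{i}\pmod p$ --- is exactly the classical proof of this congruence, and all the sign and boundary bookkeeping (including the case $i+j\ge p$, where no term survives and $\binom{i+j}{i}\equiv 0\pmod p$ anyway) checks out.
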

	
	We also need the following known result (cf. \cite[Theorem 2.1.4]{BEW}).
	
	\begin{lemma}\label{Lem. HD in Jacobi}
		Let $p>2$ be an odd prime. Then for any nontrivial character $\psi\in\widehat{\mathbb{F}_p^{\times}}$, 
		$$J_p(\phi,\psi)=\psi(4)\cdot J_p(\psi,\psi).$$
	\end{lemma}

	Now we are in a position to prove our last result.
	
	{\bf \noindent Proof of Corollary \ref{Cor. of Thm. Iq when q=1 mod 4}.} Let notations be as above and let $p\equiv 1\pmod 4$ be a prime with $p=2n+1$. When $p=5$, our result clearly holds. 
	
	Suppose now $p\ge 7$.	Recall that  $$\lambda_k(d)=\sum_{s\in\mathcal{S}_p}\phi(1+ds)\chi_p^k(s)$$
	for any $k\in [1,n-1]$, and that $\mathcal{O}_K$ is the ring of all algebraic integers over $K=\mathbb{Q}(\zeta_{p-1})$. Since $p\equiv 1\pmod4$, there is a subset $H\subseteq \mathcal{S}_p\setminus\{0\}$ such that $H\cup -H=\mathcal{S}_p\setminus\{0\}$ and $H\cap -H=\emptyset$, where 
	$-H=\{-s:\ s\in H\}$. By this and noting that $\phi(1+ds)=\pm 1$ for any $s\in\mathcal{S}_p\setminus\{0\}$,  one can verify that 
	\begin{align*}
		\lambda_k(d)&=\sum_{s\in H}\phi(1+ds)\chi_p^k(s)+\sum_{s\in H}\phi(1-ds)\chi_p^k(-s)\\
		&\equiv \sum_{s\in H}\chi_p^k(s)+(-1)^k\sum_{s\in H}\chi_p^k(s)\\
		&\equiv 2\sum_{s\in H}\chi_p^k(s)\\
		&\equiv 0\pmod {2\mathcal{O}_K}.
	\end{align*}
	Applying this and noting that  	$$\lambda_k(d)=\frac{(-1)^k\chi_p^{-k}(d)}{2}\left(J_p(\phi,\chi_q^k)-J_p(\phi,\chi_p^{-k})\right),$$
	we have 
	$$\frac{1}{4}\left(J_p(\phi,\chi_q^k)-J_p(\phi,\chi_p^{-k})\right)\in\mathcal{O}_K$$
   for any $k\in[1,n-1]$. Thus, the rational number 
$$\frac{y_p}{2^{n-1}}={\bm i}^{(n-2)/2}\cdot \sqrt{(p-1)/4}\cdot \prod_{k\in (0,n/2)}\frac{1}{4}\left(J_p(\phi,\chi_q^k)-J_p(\phi,\chi_p^{-k})\right)$$
is indeed an algebraic integer. Hence, 
	\begin{equation}\label{Eq. a in the proof of Cor. of Thm. Iq when q=1 mod 4}
		z_p=\frac{y_p}{2^{n-1}}\in\mathbb{Z}.
	\end{equation}
		
    On the other hand, for any $d\in\mathbb{F}_p^{\times}\setminus\mathcal{S}_p$, by (\ref{Eq. d in the proof of Thm. Iq when q=1 mod 4}) one can verify that 
    \begin{align*}
    	 (-1)^{n/2}\cdot J_p(\phi,\chi_p^{n/2})
    	 &=\sum_{j\in[1,n]}\phi(1+s_j)\chi_p^{n/2}(s_j)+\sum_{j\in[1,n]}\phi(1+ds_j)\chi_p^{n/2}(ds_j)\\
    	 &=\sum_{j\in[1,n]}\phi(1+s_j)\phi(\sqrt{s_j})\pm{\bm i}\sum_{j\in[1,n]}\phi(1+ds_j)\phi(\sqrt{s_j})\\
    	 &=\lambda_{n/2}(1)\pm {\bm i} \cdot \lambda_{n/2}(d)\\
    	 &=\lambda_{n/2}(1)\mp {\bm i} \cdot \frac{a_d(p)}{2}. 
    \end{align*}
   Also, as $p\equiv 1\pmod 4$, we see that 
   $$\lambda_{n/2}(1)\equiv n-1\equiv 1\pmod 2.$$ 
   Hence, if we write $p=c_p^2+4b_p^2$ with $c_p,b_p\in\mathbb{Z}^+$, then 
   \begin{equation}\label{Eq. b in the proof of Cor. of Thm. Iq when q=1 mod 4}
   	 \pm a_d(p)/4=b_p>0.
   \end{equation}
  Now applying Theorem \ref{Thm. Iq when q=1 mod 4}, by (\ref{Eq. a in the proof of Cor. of Thm. Iq when q=1 mod 4}) and (\ref{Eq. b in the proof of Cor. of Thm. Iq when q=1 mod 4}) we see that 
  $$\left|\det T_d(p)\right|/(2^n \cdot b_p)=z_p^2$$
  is a square of the integer $z_p$.
	
	It remains to prove that the integer $z_p$ is a quadratic residue modulo $p$. Let notations be as in Lemma \ref{Lem. congruence of Jacobi sums}. As $(\frac{z_p}{p})$ is independent of the choice of the generator $\chi_p$, we now let $\chi_p=\omega_{\mathfrak{p}}$ be the Teich\"{u}muller character $\omega_{\mathfrak{p}}$ of $\mathfrak{p}$, where $\mathfrak{p}$ is a prime ideal of $\mathcal{O}_K=\mathbb{Z}[\zeta_{p-1}]$ with $p\in\mathfrak{p}$. Then by Lemma \ref{Lem. HD in Jacobi} and Lemma \ref{Lem. congruence of Jacobi sums}
	\begin{align*}
			y_p
			&\equiv {\bm i}^{(n-2)/2}\cdot \sqrt{p-1}\cdot \prod_{k\in (0,n/2)}\left(J_p(\phi,\chi_q^k)-J_p(\phi,\chi_p^{-k})\right)\\
			&\equiv {\bm i}^{(n-2)/2}\cdot \sqrt{p-1}\cdot \prod_{k\in (0,n/2)}-J_p(\phi,\chi_p^{-k})\\
			&\equiv {\bm i}^{(n-2)/2}\cdot \sqrt{p-1}\cdot \prod_{k\in (0,n/2)}-\chi_p^{-k}(4)\cdot J_p(\chi_p^{-k},\chi_p^{-k})\\
			&\equiv {\bm i}^{(n-2)/2}\cdot \sqrt{p-1}\cdot \prod_{k\in (0,n/2)}\chi_p^{-k}(4)\cdot \binom{2k}{k}\\
			&\equiv \pm {\bm i}^{\frac{n}{2}} \cdot \prod_{k\in (0,n/2)}\chi_p^{-k}(4)\cdot \binom{2k}{k} \pmod{\mathfrak{p}}.
	\end{align*}
     As $\mathbb{Z}[\zeta_{p-1}]/\mathfrak{p}\cong\mathbb{F}_p$ and $z_p=y_p/2^{n-1}$, by the above results and Lemma \ref{Lem. zp is a quadratic residue mod p}, one can verify that 
     \begin{equation*}
     	\left(\frac{z_p}{p}\right)
     	=\left(\frac{2}{p}\right)^{n-1}\left(\frac{y_p}{p}\right)
     	=\left(\frac{2}{p}\right)\left(\frac{\sqrt{-1}}{p}\right)^{\frac{n}{2}}\left(\frac{B_p}{p}\right)\left(\frac{n!}{p}\right)=(-1)^{\frac{n}{2}(\frac{n}{2}+1)}=1,
     \end{equation*}
	where $\sqrt{-1}\in\mathbb{F}_p$ with $(\sqrt{-1})^2=-1\in\mathbb{F}_p$. 
	
	In view of the above, we have completed the proof. \qed

	\section{Concluding Remarks}
	\setcounter{lemma}{0}
	\setcounter{theorem}{0}
	\setcounter{equation}{0}
	\setcounter{conjecture}{0}
	\setcounter{remark}{0}
	\setcounter{corollary}{0}

	Let notations be as above. Recently, Z.-W. Sun \cite[Conjecture 6.1]{Sun25} posed the following conjecture, which is a refinement of Conjecture \ref{Conjecutre1. Sun}.
	
	\begin{conjecture}\label{Conjecture 2 of Sun}
		Let $p\equiv 1\pmod 4$ be a prime. Then, there is an integer $t_p$ with $(\frac{t_p}{p})=1$ such that 
		$$\det T_p(d)=2^{\frac{p-3}{2}}\cdot \left(\frac{p-1}{4}t_p\right)^2\cdot \sum_{j=1}^{\frac{p-1}{2}}\left(\frac{x^3+dx}{p}\right)$$
		for any $d\in\mathbb{F}_p^{\times}\setminus\mathcal{S}_p$. 
	\end{conjecture}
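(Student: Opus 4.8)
The plan is to combine the eigenvalue description of $T_p(d)$ behind Theorem~\ref{Thm. Iq when q=1 mod 4} with an evaluation of the character sum, and then to determine the arithmetic of the resulting $d$-independent constant. Set $n=(p-1)/2$; the case $p=5$ is a direct check, so I take $p\ge 7$. The first step is the identity
$$\sum_{x=1}^{(p-1)/2}\left(\frac{x^3+dx}{p}\right)=-\frac{a_d(p)}{2}=\lambda_{n/2}(d)\qquad(d\in\mathbb{F}_p^{\times}\setminus\mathcal{S}_p).$$
As $p\equiv 1\pmod4$ gives $\phi(-1)=1$, the map $x\mapsto\phi(x^3+dx)$ is even, so the left sum is $\tfrac12\sum_{x\in\mathbb{F}_p}\phi(x^3+dx)$; the substitution $x\mapsto d^{-1}x$ shows $\sum_x\phi(dx^3+x)=\phi(d^{-2})\sum_x\phi(x^3+dx)=\sum_x\phi(x^3+dx)$, which is $-a_d(p)$ by \eqref{Eq. definition of ad(q)}, and \eqref{Eq. d in the proof of Thm. Iq when q=1 mod 4} supplies the last equality.

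Combining this with the formula $\det T_p(d)=\frac{\lambda_{n/2}(d)}{2^{\,n-1}}y_p^2$ from \eqref{Eq. c in the proof of Thm. Iq when q=1 mod 4}, I obtain
$$\det T_p(d)=\frac{y_p^2}{2^{\,n-1}}\sum_{x=1}^{(p-1)/2}\left(\frac{x^3+dx}{p}\right),$$
so $\det T_p(d)$ is a fixed rational multiple of the character sum — the structural core of the conjecture. Matching with the asserted shape $2^{(p-3)/2}\big(\tfrac{p-1}{4}t_p\big)^2$ and using $(p-3)/2=n-1$, $\tfrac{p-1}{4}=\tfrac n2$, this is equivalent to $t_p=\pm y_p/(2^{\,n-2}n)$ being an integer with $(t_p/p)=1$. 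Let $w_p$ be the positive square root of the integer $\lvert\det T_p(d)\rvert/(2^{(p-1)/2}b_p)$ furnished by Conjecture~\ref{Conjecutre1. Sun} (now a theorem via Corollary~\ref{Cor. of Thm. Iq when q=1 mod 4}); since $\lvert\lambda_{n/2}(d)\rvert=2b_p$, comparison gives $w_p=\lvert y_p\rvert/2^{\,n-1}$, so that $t_p=\pm w_p/\tfrac{p-1}{4}$ and the integrality of $t_p$ is exactly the divisibility $\tfrac{p-1}{4}\mid w_p$.

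The residue condition is the more tractable half. Because $\tfrac{p-1}{4}\equiv -4^{-1}\pmod p$ is a nonzero square, $t_p\equiv -4w_p\pmod p$, whence $(t_p/p)=(w_p/p)$; thus it suffices to show that the square root $w_p$ in Sun's conjecture is itself a quadratic residue. This I would obtain exactly as in the proof of Corollary~\ref{Cor. of Thm. Iq when q=1 mod 4}: take $\chi_p$ to be the Teichm\"{u}ller character $\omega_{\mathfrak p}$, reduce each factor $J_p(\phi,\chi_p^k)-J_p(\phi,\chi_p^{-k})$ modulo $\mathfrak p$ through Lemma~\ref{Lem. HD in Jacobi} and Lemma~\ref{Lem. congruence of Jacobi sums} to a binomial coefficient, and evaluate the resulting product by Lemma~\ref{Lem. zp is a quadratic residue mod p}.

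The hard part will be the divisibility $\tfrac{p-1}{4}\mid w_p$. Expanding $y_p^2=(p-1)\,{\bm i}^{\,n-2}\,I_p(\chi_p)^2$ and using $J_p(\phi,\chi_p^{-k})=\overline{J_p(\phi,\chi_p^{k})}$ yields
$$y_p^2=2^{\,n-1}\,n\prod_{k\in(0,n/2)}\big(\mathrm{Im}\,J_p(\phi,\chi_p^k)\big)^2,$$
whereby $\tfrac{p-1}{4}\mid w_p$ becomes the assertion that $\prod_{k\in(0,n/2)}\big(\mathrm{Im}\,J_p(\phi,\chi_p^k)\big)^2$ equals $2^{\,n-3}n$ times a perfect square. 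This controls the exact power of every prime dividing $\tfrac{p-1}{4}$ — the $2$-part as well as the odd part — in the product of the imaginary parts of the $J_p(\phi,\chi_p^k)$, and it is supplied neither by the eigenvalue analysis of Lemma~\ref{Lem. eigenvaules of matrix Tq(d)} nor by the Galois-descent arguments used for Theorems~\ref{Thm. Iq when q=3 mod 4} and~\ref{Thm. Iq when q=1 mod 4}. To reach it I would factor the Gauss sums $g(\phi),g(\chi_p^k),g(\phi\chi_p^k)$ behind each Jacobi sum via Stickelberger's theorem and track the relevant valuations prime by prime; this step is, I expect, the principal difficulty, and the reason the statement remains only conjectural.
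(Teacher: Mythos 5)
You are being asked to prove what the paper itself treats as an \emph{open conjecture}: Conjecture \ref{Conjecture 2 of Sun} appears only in the Concluding Remarks, where the authors perform essentially the same reduction you do and then state that they ``can not prove this result by the methods used in this paper.'' Your preparatory steps are correct and coincide with the paper's: the evenness of $x\mapsto\phi(x^3+dx)$ and the substitution $x\mapsto d^{-1}x$ give $\sum_{x=1}^{(p-1)/2}\left(\frac{x^3+dx}{p}\right)=-a_d(p)/2=\lambda_{n/2}(d)$, and combining this with $\det T_p(d)=\frac{\lambda_{n/2}(d)}{2^{n-1}}y_p^2$ from (\ref{Eq. c in the proof of Thm. Iq when q=1 mod 4}) reduces the conjecture to the assertion that $t_p=\pm 4w_p/(p-1)$ is an integer with $\left(\frac{t_p}{p}\right)=1$, where $w_p$ is the square root supplied by Corollary \ref{Cor. of Thm. Iq when q=1 mod 4}; the quadratic-residue half then does follow from the Teichm\"{u}ller-character computation via Lemmas \ref{Lem. congruence of Jacobi sums}, \ref{Lem. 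HD in Jacobi} and \ref{Lem. zp is a quadratic residue mod p}, as you indicate. (Incidentally, your normalization $w_p=|y_p|/2^{n-1}$ is the one actually forced by Theorem \ref{Thm. Iq when q=1 mod 4}: since $|a_d(p)|=4b_p$, one gets $|\det T_p(d)|/(2^nb_p)=(y_p/2^{n-1})^2$, and the displayed $z_p=y_p/2^{(n-2)/2}$ in Corollary \ref{Cor. of Thm. Iq when q=1 mod 4} looks like a misprint --- already for $p=5$ it gives $z_5=2$ while the quotient equals $1$.)

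The genuine gap is the divisibility $\frac{p-1}{4}\mid w_p$. At this point your proposal offers only a programme --- control the valuations of the imaginary parts of the Jacobi sums $J_p(\phi,\chi_p^k)$ prime by prime via Stickelberger --- and no argument; and this is exactly where the paper also stops, having reduced Conjecture \ref{Conjecture 2 of Sun} to the integrality of $4z_p/(p-1)$ without being able to establish it. So your write-up faithfully reconstructs everything the paper proves \emph{about} this statement, but it is not a proof of the statement, and neither is anything in the paper: the conjecture remains open, and you should not present this reduction as settling it.
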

	
	In this section, we briefly discuss this stronger conjecture. Using (\ref{Eq. b in the proof of Cor. of Thm. Iq when q=1 mod 4}), we first observe that  
	\begin{align*}
		  2\sum_{j=1}^{(p-1)/2}\left(\frac{x^2+d}{p}\right)\left(\frac{x}{p}\right)
		&=\sum_{x\in\mathbb{F}_p}\left(\frac{x^2+d}{p}\right)\left(\frac{x}{p}\right)\\
		&=\sum_{x\in\mathbb{F}_p}\left(\frac{(dx)^2+d}{p}\right)\left(\frac{dx}{p}\right)\\
		&=\sum_{x\in\mathbb{F}_p}\left(\frac{dx^2+1}{p}\right)\left(\frac{x}{p}\right)\\
		&=-a_p(d)\\
		&=\pm 2b_p.
	\end{align*}
	Thus, Conjecture \ref{Conjecture 2 of Sun} is equivalent to saying that 
	$$\frac{\left|\det T_p(d)\right|}{2^{(p-1)/2}\cdot b_p}=\left(\frac{p-1}{4}t_p\right)^2$$
	for some intger $t_p$ with $(\frac{t_p}{p})=1$. On the other hand, by the proof of Corollary \ref{Cor. of Thm. Iq when q=1 mod 4}, we have proven that 
	$$\frac{\left|\det T_p(d)\right|}{2^{(p-1)/2}\cdot b_p}=z_p^2,$$
	where 
	$$z_p=\frac{{\bm i}^{\frac{p-5}{4}}\cdot  I_p(\chi_p)\cdot \sqrt{p-1}}{2^{(p-3)/2}}\in\mathbb{Z}$$
	with $(\frac{z_p}{p})=1$.
	
	In view of the above, we clearly see that Conjecture \ref{Conjecture 2 of Sun} holds if and only if the rational number $4z_p/(p-1)$ is an integer. However, we cannot prove this currently.

	{\noindent \bf  Declaration of competing interest}\ The authors declare that they have no conflict of interest.
	
	{\noindent \bf Data availability}\ No data was used for the research described in the article.

	\Ack\ This research was supported by the Natural Science Foundation of China (Grant Nos. 12101321 and 12071208). The first author was also supported by the Natural Science Foundation of the Higher Education Institutions of Jiangsu Province (Grant No. 25KJB110010). The second author was also supported by the 333 High-level Talents Project of Jiangsu Province (2022-3-16-471).


\begin{thebibliography}{99}
	
	\bibitem{BEW} B. C. Berndt, R. J. Evans, K. S. Williams, Gauss and Jacobi Sums, Wiley, New York, 1998.
	
	\bibitem{Carlitz}  L. Carlitz, Some cyclotomic matrices, Acta Arith. 5 (1959), 293--308.	
	
	\bibitem{Cohen} H. Cohen, Number Theory, Vol. I. Tools and Diophantine Equations,  Graduate Texts in Math., 239, Springer, New York, 2007.	
	
	\bibitem{Ire} K. Ireland and M. Rosen, A classical introduction to modern number theory, 2nd Edition, Graduate Texts in Math., 84, Springer, New York, 1990.
		
	\bibitem{Jenkins} M. Jenkins, Proof of an arithmetical theorem leading, by means of Gauss fourth demonstration of Legendres law of reciprocity, to the extension of that law, Proc. London Math. Soc. 2 (1867), 29--32.
		
	\bibitem{Kra}  I. Kra, and S. R. Simanca, On circulant matrices, Not. Am. Math. Soc. 59 (2012), 368--377.	
		
	\bibitem{Serge-Lang} S. Lang, Algebraic number theory, 2nd Edition, Graduate Texts in Math., 110, Springer, New York, 1994.
		
	\bibitem{Lidl} R. Lidl and H. Niederreiter, Finite Fields, 2nd edition. Cambridge University Press, 1997.
		
		
	\bibitem{Pan} H. Pan, A remark on Zoloterav's theorem, preprint, arXiv:0601026,  2006.
		
	\bibitem{Silverman} J. H. Silverman, The Arithmetic of Elliptic Curves, 2nd ed., Springer, New York, 1990.
		
	\bibitem{Stembridge} J. R. Stembridge, Nonintersecting paths, pfaffians and plane partitions, Adv. in Math. 83 (1990), 96--131.
		
	\bibitem{Sun19}  Z.-W. Sun, On some determinants with Legendre symbol entries, Finite Fields Appl. 56 (2019), 285--307.	
		
	\bibitem{Sun25} Z.-W. Sun, Some determinants involving quadratic residues modulo primes, Frontiers Math., in press. 
	
	\bibitem{Wu-Cr} H.-L. Wu, Determinants concerning Legendre symbols. C. R. Math. Acad. Sci. Paris 359 (2021), 651--655.

	
	\bibitem{Wu-Wang}  H.-L. Wu and L.-Y. Wang,	The Gross-Koblitz formula and almost circulant matrices related to Jacobi sums, Finite Fields Appl. 103 (2025), Article 102581.
	
	
	
	\end{thebibliography}
\end{document}